\newtheorem{remark}[theorem]{Remark}
\newcommand{\be}{\begin{equation}}
\newcommand{\ee}{\end{equation}}
\newcommand{\ba}{\begin{aligned}}
\newcommand{\ea}{\end{aligned}}
\DeclareMathOperator{\Span}{span}
\def\Im{\mbox{Im}}
\newcommand{\br}{{\boldsymbol r}} % source point
\newcommand{\brp}{{\boldsymbol r}'} % target point
\newcommand{\bD}{{\boldsymbol D}}
\newcommand{\bS}{{\boldsymbol S}}
\newcommand{\slp}{{{\mathcal{S}}}} 
\newcommand{\dlp}{{{\mathcal{D}}}}
\newcommand{\sprime}{{{\mathcal{S}'}}}
\newcommand{\dprime}{{{\mathcal{D}'}}}
\newcommand{\hlm}{H^{(l,m)}} % real solid harmonic polynomials
\newcommand{\htlm}{\tilde{H}^{(l,m)}}
\newcommand{\nhlm}{\nabla H^{(l,m)}}
\newcommand{\slm}{S^{(l,m)}} % complex solid harmonic polynomials
\newcommand{\stlm}{\accentset{\circ}{S}^{(l,m)}} % complex solid harmonic polynomials
\newcommand{\nslm}{\nabla S^{(l,m)}}
\newcommand{\ntslm}{\accentset{\circ}{\nabla} S^{(l,m)}} 
\newcommand{\rlm}{R_l^{m}} % complex solid harmonic polynomials
\newcommand{\ylm}{Y_l^{m}} % spherical harmonics
\newcommand{\plm}{P_l^{m}} % associated Legendre polynomials
\newcommand{\Flm}{F^{(l,m)}}
\newcommand{\bi}{{\boldsymbol i}}
\newcommand{\bj}{{\boldsymbol j}}
\newcommand{\bk}{{\boldsymbol k}}
\newcommand{\bu}{{\boldsymbol u}}
\newcommand{\bzero}{{\boldsymbol 0}} % the zero vector
\newcommand{\np}{n_{\rm p}} % number of points on each patch
\newcommand{\bp}{{\boldsymbol p}}
\newcommand{\tp}{{\tilde{p}}}
\newcommand{\tc}{{\tilde{c}}}
\newcommand{\dlptform}{\alpha^{(l,m)}} % DLP real 2-forms
\newcommand{\dlpoform}{\beta^{(l,m)}} % DLP real 1-forms
\newcommand{\cdlptform}{\tilde{\alpha}^{(l,m)}} % DLP complex 2-forms
\newcommand{\cdlpoform}{\tilde{\beta}^{(l,m)}} % DLP complex 1-forms
\newcommand{\gammalm}{\gamma^{(l,m)}} % DLP complex 1-forms for the constant term
\newcommand{\bomega}{\boldsymbol \omega} % DLP complex 1-forms for the constant term
\newcommand{\bOmega}{\boldsymbol \Omega} % DLP complex 1-forms for the constant term
\newcommand{\slptform}{\chi^{(l,m)}} % SLP real 2-forms
\newcommand{\slpoform}{\psi^{(l,m)}} % SLP real 1-forms
\newcommand{\cslptform}{\tilde{\chi}^{(l,m)}} % SLP complex 2-forms
\newcommand{\cslpoform}{\tilde{\psi}^{(l,m)}} % SLP complex 1-forms
\newcommand{\clm}{c^{(l,m)}} % DLP quaternion density approximation coefficients
\newcommand{\dlm}{d^{(l,m)}} % SLP quaternion density approximation coefficients
\newcommand{\glm}{g^{(l,m)}} % DLP quaternion density approximation coefficients
\newcommand{\wlm}{w^{(l,m)}} 
\newcommand{\Dlm}{D^{(l,m)}} 
\newcommand{\bclm}{{\bf c}^{(l,m)}}
\newcommand{\f}{{\boldsymbol f}} % vector f
\newcommand{\g}{{\boldsymbol g}} % vector g
\newcommand{\h}{{\boldsymbol h}} % vector g
\newcommand{\qjk}{{q^{(j,k)}}}
\newcommand{\Qjk}{{Q^{(j,k)}}}
\newcommand{\vjk}{{v^{(j,k)}}}
\newcommand{\Vjk}{{V^{(j,k)}}}
\newcommand{\bnu}{\boldsymbol{\nu}} % unit normal vector
\def\ccm{Center for Computational Mathematics, Flatiron Institute, Simons Foundation,
  New York, New York 10010}
\def\papertitle{Recursive reduction quadrature for the evaluation of Laplace layer
  potentials in three dimensions}
\title{\papertitle}
\author{Shidong Jiang\thanks{\ccm\,({\tt sjiang@flatironinstitute.org})}
  \and Hai Zhu\thanks{\ccm\,({\tt hzhu@flatironinstitute.org}).}}
\begin{document}

\maketitle

\begin{abstract}
  A high-order quadrature scheme is constructed for the evaluation of Laplace single
  and double layer potentials and their normal derivatives on smooth surfaces in three
  dimensions. The construction begins with a harmonic approximation of the density
  {\it on each patch}, which allows for a natural harmonic polynomial extension in a
  {\it volumetric neighborhood} of the patch in the ambient space. Then by the
  general Stokes theorem,
  singular and nearly singular surface integrals are reduced to line integrals
  preserving the singularity of the kernel, instead of the standard origin-centered
  1-forms that require expensive adaptive integration. These
  singularity-preserving line integrals can be semi-analytically evaluated using
  singularity-swap quadrature. In other words, the evaluation of singular and nearly
  singular surface integrals is reduced to function evaluations at the vertices on
  the boundary of each patch. The recursive reduction quadrature largely removes
  adaptive integration that is needed in most existing high-order quadratures for
  singular and nearly singular surface integrals, resulting in exceptional performance.
  The scheme achieves twelve-digit accuracy uniformly for close evaluations and
  offers a speedup of five times or more in constructing the sparse
  quadrature-correction matrix compared to previous state-of-the-art quadrature schemes.
\end{abstract} 

\begin{keywords}
  Laplace layer potential,
  singular integrals,
  nearly singular integrals, high order quadrature, integration by parts,
  kernel-split quadrature
\end{keywords}

\begin{AMS}
31A08, 65D30, 65D32, 65R20
\end{AMS}

\pagestyle{myheadings}
\thispagestyle{plain}
\markboth{S. Jiang and H. Zhu}
{Recursive reduction quadrature for the Laplace layer potentials in 3D}

\section{Introduction}\label{sec:intro}
We consider the evaluation of Laplace layer potentials on smooth surfaces
in three dimensions. The Laplace single and double layer potentials and their
normal derivatives are defined by the formulas
\begin{align}
\slp[\sigma](\brp)&=\int_S G(\brp,\br)\sigma(\br)da(\br),\label{slpdef}\\
\dlp[\mu](\brp)&=\int_S \frac{\partial G(\brp,\br)}{\partial \bnu} \mu(\br)da(\br),\label{dlpdef}\\
\sprime[\sigma](\brp)&=\int_S \frac{\partial G(\brp,\br)}{\partial \bnu'}
\sigma(\br)da(\br),
\label{sprimelpdef}\\
\dprime[\mu](\brp)&=\int_S \frac{\partial^2 G(\brp,\br)}{\partial \bnu'\partial \bnu} \mu(\br)da(\br),\label{dprimelpdef}
\end{align}
where $G$ is the Laplace Green's function 
\be\label{greenfunction}
G(\brp,\br)=\frac{1}{4\pi |\brp-\br|}, \quad \brp, \br \in \mathbb{R}^3.
\ee
Here, $da(\br)$ is the surface differential at $\br$. The point
$\brp$ is referred to as the target point, while $\br$ is the source point.
The symbols $\bnu'$ and $\bnu$ represent the unit outward normal vectors at $\brp$
and $\br$, respectively.
We assume that the densities $\sigma$ and $\mu$ are sufficiently smooth,
and that the surface $S$ is piecewise
smooth and divided into a collection of patches,
where each patch is sufficiently smooth and admits a high-order parameterization.
Thus, the only singularity in the above integrals
is induced by the Green's function or its derivatives.

When integral equation methods are applied to solve boundary value problems
(BVPs) of elliptic partial differential equations, the solution is represented
using layer potentials, and sometimes volume potentials as well. Imposing the
boundary conditions leads to boundary integral equations (BIEs). These BIEs
are discretized, and the resulting linear system is solved using either
iterative solvers such as GMRES~\cite{gmres}, accelerated by fast algorithms
like the fast multipole method (FMM)~\cite{greengard1987thesis,greengard1988,greengard1987jcp,carrier1988sisc,fmm2,fmm3,fmm4,fmm6,fmm7,fmm8},
or fast direct solvers~\cite{fds1,rskelf1,fds16,fds2,fds3,kenho1,ho2016cpam1,ho2016cpam2,fds5,fds6,fds7,fds8,minden2017mms,rskelf2}.
Finally, layer potentials are evaluated to obtain the solution throughout the domain.
Thus, layer potentials are fundamental building blocks in fast integral
equation methods: they are used in both the solve phase and the evaluation phase.
Their accuracy greatly affects the accuracy of the BVP solution, and their efficiency
is often a bottleneck in the solver's overall speed.

Many engineering applications involve complex geometries where mesh generation -- such
as triangulation -- is the initial step in discretization. In such cases, a
\emph{local quadrature} method divides the entire surface into a collection of
elements with finite geometries (also referred to as chunks, panels, or patches
in the literature) and then constructs quadrature nodes and weights for each element.
Consequently, one only needs to consider the evaluation of Laplace layer potentials
\cref{slpdef} -- \cref{dprimelpdef} when the surface $S$ is replaced by one of these
patches, denoted as $P$. We will primarily focus on triangular patches to avoid
diverting the reader from the main ideas with technical details specific to
non-triangular patches, even though our quadrature scheme essentially works for
patches of \emph{arbitrary shape}.

Depending on the distance of the target to the source patch, there are altogether
four cases:
\begin{enumerate}[label=(\alph*)]
\item Smooth interaction: the target $\brp$ is sufficiently far from the source element.
\item Self interaction: the target $\brp$ is on the source element.
\item Near interaction: the target $\brp$ is not on the source element, but on $S$ and
  close to the source element.
\item Close evaluation: the target $\brp$ is off-surface and close to the source element.
\end{enumerate}
In the case of smooth interaction, the integrals are smooth, and high-order quadrature can
be used to evaluate layer potentials in both two and three dimensions.
For case (b), the integrals are weakly singular (i.e., integrable in the Riemann sense,
mostly with logarithmic singularity in two dimensions or $1/r$ singularity in
three dimensions for layer potentials), singular (i.e., the so-called principal value
integrals), or hypersingular (i.e., the so-called Hadamard finite-part integrals).
With a slight abuse of terminology, we will simply refer to case (b) as singular
integrals. For cases (c) and (d), the integrals are nearly weakly singular, nearly
singular, or nearly hypersingular. With a slight abuse of terminology, we will refer
to these two cases as near-singular integrals. The difference between case (c) and
case (d) is that the target point is associated with a parameter value in the parameter
space for $S$ in case (c), while the target point only has coordinates in the
underlying physical space in case (d).
Somewhat counterintuitive to those unfamiliar with integral equation methods,
case (b) is actually not so hard to handle. Since the target is on the source element,
various techniques can be employed to remove or cancel out the singularity in the
kernels. Case (d) proves to be the most challenging
because the target lies in the ambient space and lacks an associated parameter value,
rendering many singularity subtraction or removal techniques inapplicable.

Over the past fifty years or so, numerous researchers have tackled this important
problem. In \cite{duan2009high,gimbutas2013fast,graham2002fully,wu2021nm,wu2021acom,wu2023sinum}, high-order global quadratures have been developed for
surfaces homeomorphic to the sphere or torus. In \cite{duffy} and subsequently
\cite{bruno1,bruno2}, polar coordinates centered at the given target point
are used to eliminate the $1/r$ singularity for self interactions. In
\cite{gimbutas2012jcp,gimbutas2013jcp},
{\it generalized Gaussian quadrature} (GGQ)~\cite{ggq1,ggq2,ggq3}
is applied along each radial direction in polar coordinates to achieve
better efficiency and accuracy, capable of handling triangles with aspect ratio
as high as a million.
For cases (c) and (d), a popular approach is to evaluate layer
potentials for targets sufficiently away from the surface using oversampled smooth
quadrature rule, then apply interpolation/expolation to evaluate the layer potential
at the given target point. This includes the QBX {\it (quadrature by expansions)}~\cite{qbx2,qbx1,rachh2017jcp,siegel2018jcp,wala2019jcp,wala2020jcp},
its descendants~\cite{ding2021quadrature,morse2021robust,stein2022quadrature},
and methods in \cite{beale2004grid,beale2024extrapolated}. Recently, there are also
quadratures based on the harmonic density interpolation~\cite{perez2019harmonic},
planewave density interpolation~\cite{perez2019planewave},
and adaptive integration~\cite{greengard2021jcp} that combines the FMM
and careful precomputed tables.

In this paper, we introduce a unified quadrature scheme that handles self and near
interactions, as well as close evaluations, eliminating the need for adaptive integration.
For the Laplace double layer potential (DLP), 
we first 
approximates the density $\sigma$ on $P$ in {\it quaternion algebra} such that the
resulting approximation has a {\it natural harmonic polynomial extension}.
The harmonic polynomial extension is valid in the whole ambient space,
but only needed in a {\it volumetric neighborhood} of $P$. Such density
approximation enables the application of the {\it general Stokes theorem}
without any further
approximation in exact arithmetics and reduces singular and nearly
singular integrals on the surface patch $P$
to line integrals on the boundary of $P$. In other words, the scheme approximates
the density in such a way that one may apply the integration by parts formula to
analytically reduce surface integrals to line integrals on the boundary of each
surface patch. This differs from all prior approaches except the work
in \cite{zhu2021thesis,zhu2022sisc}.
We then derive new expressions of these line integrals such that they preserve
the singularity and translation invariance of the kernels in the original surface integrals.
The advantage of doing this is that we can apply the singularity-swap
quadrature~\cite{afklinteberg2021bit} 
to evaluate these nearly singular line integrals analytically by recurrence.
To be more precise, the new line integrals can be evaluated via another
step of integration by parts, this time by extending
the parameter into the complex plane rather than using the general Stokes theorem.
Hence the {\it recursive reduction quadrature} (RRQ).

We describe in detail the improvements made in RRQ over the work
in \cite{zhu2021thesis,zhu2022sisc} in \cref{sec:improvements}. 
Here, we summarize the novel features that make it more elegant, accurate, and efficient
than other existing methods.

\vspace{.2in}

\begin{enumerate}[label=(\arabic*)]
\item Unlike the popular Duffy quadrature used in boundary element methods
  and its aforementioned descendants, which are used
  only for self-interactions, RRQ unifies the treatment of self and near interactions,
  as well as close evaluations.
\item Unlike other existing methods that evaluate the original surface integrals directly
  using some kind of adaptive integration, oversampling,
  target interpolation or extrapolation,
  RRQ reduces surface integrals via integration-by-parts twice, achieving full reduction
  from surface integrals to function
  evaluations at the vertices of the surface elements.
\item RRQ further divides target points into two groups. The first group consists of
  points close to the surface but sufficiently away from the edges of the surface
  element, where much less computational effort is needed to evaluate the line
  integrals. The second group includes points close to the edges of the surface
  element, where the singularity-swap quadrature requires only a mild increase in
  computational effort. Thus, RRQ reduces the list of targets that require
  significant computational effort from a neighborhood of the entire surface
  element to the neighborhood of its boundary.
\item Unlike some existing methods that rely on asymptotics or low-order density
  approximations, RRQ can achieve much higher convergence order and accuracy.
\end{enumerate}

\vspace{.2in}

Combining all these features, RRQ achieves high efficiency and robustness and can
be constructed to have arbitrarily high order. It attains twelve-digit accuracy uniformly
for close evaluations and provides a speedup of fivefold or more over previous
state-of-the-art quadrature methods.

The outline of the paper is as follows. In \cref{sec:prelim}, we collect and review
the analytical apparatus used in the rest of the paper. In \cref{sec:mainresults},
we present the main theoretical results. In \cref{sec:algorithm}, we discuss the
algorithmic steps in the scheme and analyse the cost of each step. Numerical examples
are shown in \cref{sec:numericalexamples}. Finally, \cref{sec:conclusions} contains
some further remarks on future extensions of the work.

\section{Mathematical apparatus}\label{sec:prelim}
In this section, we collect some results to be used in later sections, including
quaternion algebra, exterior derivatives, differential forms, general Stokes theorem,
harmonic polynomials, singularity-swap quadrature, and key results from
\cite{zhu2021thesis,zhu2022sisc}. We would like to remark here that even though it
might be possible to write the entire paper using vector calculus, we choose to
use quaternion algebra and differential forms 
due to the following reasons.
First, quaternion algebra greatly simplifies many equations.
Second, exterior derivatives and differential forms are standard tools
for the general Stokes theorem.
The use of differential forms, which employ Cartesian coordinates in the ambient space,
also makes it easy to understand why close evaluation can be treated the
same way as near interactions.
Nevertheless, the paper remains mostly self-contained, and the use of quaternion
algebra and differential forms is minimized to ensure it is easily accessible to readers.

\subsection{Quaternion algebra}
The quaternion algebra, as pointed out in \cite{rosen2019}, is the even part of the Clifford
algebra. For a quaternion $f=f_0+f_1\bi +f_2\bj+f_3\bk$, we also write it
as $f=(f_0,\f)$, where $f_0$ and $\f$ are the scalar and vector parts of the
quaternion $f$, respectively. Here, we identify three imaginary basis vectors
$\bi$, $\bj$, $\bk$ in the quaternion algebra with three basis vectors
in $\mathbb{R}^3$. The product of two quaternions, $f=(f_0,\f)$ and $g=(g_0,\g)$,
can then be expressed using the dot and cross products of vectors as follows:
\be\label{qprule}
fg=(f_0g_0-\f\cdot\g,f_0\g+g_0\f+\f\times\g),
\ee
where $\cdot$ denotes the vector dot product, and $\times$ denotes the cross product. 
The quaternion (and the Clifford) algebra is associative, but non-commutative. That is,
if $f$, $g$, and $h$ are three quaternions, then $(fg)h$ = $f(gh)$, but $fg\ne gf$ in general.
The conjugate of a quaternion $f$ is denoted as $\bar{f}=(f_0,-\f)$, and the length of $f$
is denoted as $|f|=\sqrt{f\bar{f}}=\sqrt{f_0^2+|\f|^2}$.
It is easy to check that $f_0 g=(f_0,\bzero)g=g(f_0,\bzero)$. Thus, we identify
a scalar $f_0$ with the quaternion $(f_0,\bzero)$.

\begin{comment}
\be\label{tripleproduct}
\ba
fgh &= (f_0g_0-\f\cdot\g,f_0\g+g_0\f+\f\times\g)h\\
&=(f_0g_0h_0-h_0\f\cdot\g-f_0\g\cdot\h-g_0\f\cdot\h-(\f\times\g)\cdot\h,\\
&\quad\,\,\,\, f_0g_0\h+h_0f_0\g+g_0h_0\f-(\f\cdot\g)\h\\
&+h_0(\f\times\g)+f_0(\g\times\h)+g_0(\f\times\h)+(\f\times\g)\times\h).
\ea
\ee
\end{comment}

\subsection{Differential forms and Stokes theorem}
The general Stokes theorem on differential forms is stated as \cite{docarmo1998}
\be\label{stokestheorem}
\int_{\partial D}\omega = \int_D d\omega,
\ee
where $\omega$ is a $k$-form, $d$ is the exterior derivative, and $\partial D$ is the
boundary of $D$. A differential form $\omega$
is closed if $d\omega=0$, and exact if $\omega=d\alpha$
for some other differential form $\alpha$. Because $d^2=0$, every exact form is closed.
The Poincar{\' e} lemma \cite{docarmo1998}
states that every closed form is exact when the domain is contractible. For the rest of
the paper, we always assume that the surface $S$ is divided into a collection of
contractible patches such that the Poincar{\' e} lemma is valid on every patch $P$.
It is clear that the assumption holds for all reasonable discretizations encountered
in practice. Straightforward calculation shows that the area differential
$\bnu(\br)da(\br)$ can be written in terms of the differential 2-forms as follows:
\be\label{areadifferential}
\bnu(\br)da(\br) = dy\wedge dz\bi +dz\wedge dx\bj+dx\wedge dy\bk,
\ee
where $\br=(x,y,z)$.
Thus, a surface integral of a vector field in three dimensions
is equivalent to the integral of differential 2-forms, expressed as follows:
\be\label{surfintegral2form}
\int_S \f(\br)\cdot \bnu(\br)da(\br)= \int_S f_1dy\wedge dz+f_2dz\wedge dx+f_3dx\wedge dy.
\ee
The following lemma summarizes the analytical results required to convert
surface integrals over a patch $P$ to line integrals along its boundary $\partial P$.
\begin{lemma}\label{lemma1}
  Suppose that $P$ is a contractible surface element in $\mathbb{R}^3$ and that $f$ is a real-valued
  function defined in a neighborhood of $P$ in $\mathbb{R}^3$. Then the differential 2-form
  $\alpha=\f(\br)\cdot \bnu(\br) da(\br)$ is closed if and only if $\nabla\cdot \f =0$.
  Furthermore, if $\alpha$ is closed, then for any fixed point $\bp\in\mathbb{R}^3$
  the 1-form $\beta$ defined by the formula
\be\label{1formconstruction}
\beta=\left(-\int_0^1 t(\br-\bp)\times \f(\bp+t(\br-\bp))
dt\right)\cdot d\br
\ee
satisfies the equation
\be
d\beta = \alpha,
\ee
and thus
\be
\int_P \alpha = \int_{\partial P} \beta.
\ee
In other words, the surface integral $\int_P \alpha$ is reduced to line integrals
on its boundary, i.e., the integral of a 1-form
with the 1-form explicitly given by \cref{1formconstruction}.
\end{lemma}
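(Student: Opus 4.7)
The plan is to establish the lemma in two steps: first the ``closed iff divergence-free'' equivalence, then the explicit primitive $\beta$. The first part is essentially a direct computation. Writing $\alpha = f_1\,dy\wedge dz + f_2\,dz\wedge dx + f_3\,dx\wedge dy$ via \cref{areadifferential}, one applies $d$ and uses $dx\wedge dx = 0$ to collapse every term into a multiple of the volume 3-form $dx\wedge dy\wedge dz$. The coefficient that appears is precisely $\partial_x f_1 + \partial_y f_2 + \partial_z f_3 = \nabla\cdot\f$, giving $d\alpha = (\nabla\cdot\f)\,dx\wedge dy\wedge dz$, from which both implications follow immediately.

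For the second part, the approach is to recognize \cref{1formconstruction} as the Poincar\'e homotopy operator applied to $\alpha$ along the straight-line homotopy $\br \mapsto \bp + t(\br-\bp)$, and to verify directly that $d\beta = \alpha$. Setting
\[
\g(\br) = -\int_0^1 t\,(\br-\bp)\times\f(\bp + t(\br-\bp))\,dt,
\]
so $\beta = \g\cdot d\br$, the exterior derivative of a 1-form is the curl-2-form; that is, $d\beta$ equals $(\nabla\times\g)\cdot\bnu\,da$ written in the notation of \cref{surfintegral2form}. Thus the task reduces to proving $\nabla\times\g = \f$.

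The key calculation is to differentiate under the integral sign and apply the vector identity $\nabla\times(A\times B) = A(\nabla\cdot B) - B(\nabla\cdot A) + (B\cdot\nabla)A - (A\cdot\nabla)B$ with $A = t(\br-\bp)$ and $B = \f(\bp+t(\br-\bp))$. Using $\nabla\cdot A = 3t$, the divergence-free hypothesis $\nabla\cdot B = t(\nabla\cdot\f) = 0$, and the chain rule in the form $(A\cdot\nabla)B = t^2\,((\br-\bp)\cdot\nabla')\f$, the integrand simplifies to $2t\,\f(\br') + t^2\,((\br-\bp)\cdot\nabla')\f(\br')$, which is exactly $\frac{d}{dt}\!\left[t^2\f(\bp+t(\br-\bp))\right]$. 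Integrating from $0$ to $1$ telescopes to $\f(\br)$, giving $\nabla\times\g = \f$ and hence $d\beta = \alpha$. The conclusion $\int_P\alpha = \int_{\partial P}\beta$ then follows by a single application of the general Stokes theorem \cref{stokestheorem}.

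The only mildly delicate step is the bookkeeping in the curl-of-cross-product identity, particularly keeping straight the distinction between the outer gradient (acting on $\br$) and the inner gradient $\nabla'$ (acting on the evaluation point $\br' = \bp + t(\br-\bp)$), which produces the factor of $t$ via the chain rule; once this is handled carefully, the telescoping observation makes the remainder of the verification essentially automatic. No compactness, smoothness beyond $C^1$, or topological hypothesis beyond what has already been assumed (contractibility, which was used in invoking the Poincar\'e lemma philosophy but is in fact unnecessary for this explicit construction since $\bp$ is a single chosen basepoint) is required.
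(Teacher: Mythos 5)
Your proposal is correct and follows essentially the same route as the paper: the first part is the same direct computation $d\alpha=(\nabla\cdot\f)\,dx\wedge dy\wedge dz$, and for the second part the paper simply cites a "straightforward calculation almost identical to Lemma 3.2 of \cite{zhu2022sisc}" to get $d\beta=\alpha$ before applying Stokes, which is exactly the homotopy-formula verification you carry out explicitly via the curl-of-a-cross-product identity and the telescoping $\frac{d}{dt}\bigl[t^2\f(\bp+t(\br-\bp))\bigr]$. Your filled-in computation is accurate (signs and the chain-rule factor of $t$ included), so the only difference is that you supply the details the paper delegates to the cited reference.
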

\begin{proof}
By \cref{surfintegral2form}, we have
  \be
  d\alpha = \nabla\cdot \f dx\wedge dy\wedge dz.
  \ee
  And the first statement follows. In order to prove the second statement, a straightforward
  calculation that is almost identical to 
  that of Lemma 3.2 in \cite{zhu2021thesis,zhu2022sisc} shows that $d\beta = \alpha$ for any fixed $\bp$.
  Then the second statement follows from the general Stokes theorem \cref{stokestheorem}.
\end{proof}

\subsection{Harmonic polynomials}
The associated Legendre polynomials $\plm$ are defined by the formula~\cite{nisthandbook}
\be
\plm(x) = \frac{1}{2^l l!}(1-x^2)^{m/2}\frac{d^{l+m}}{dx^{l+m}}(x^2-1)^l.
\ee
Here, $l$ is a nonnegative integer and $-l\le m \le l$. For a given $x$,
all values of $\plm(x)$ for $l=0,\ldots,p$ and $m=0,\ldots,l$ can be
calculated via the following recurrence formulas:
\be
\ba
P_0^0(x)&=1,\\
P_{l+1}^{l+1}(x)&=-(2l+1)\sqrt{1-x^2}P_l^l(x),\\
P_{l+1}^l(x)&=(2l+1)xP_l^l(x),\\
(l-m+1)P_{l+1}^m(x)&=(2l+1)xP_l^m(x)-(l+m)P_{l-1}^m(x).
\ea\label{associatelegendrerecurrence}
\ee

The spherical harmonic polynomials $\ylm$ are defined by the formula
\be\label{ylmdef}
\ylm(\theta,\phi)=\sqrt{2l+1}\sqrt{\frac{(l-m)!}{(l+m)!}}\plm(\cos(\theta))e^{im\phi},
\ee
where we have dropped the normalization factor $\frac{1}{\sqrt{4\pi}}$ and the
Condon-Shortley phase $(-1)^m$.

The regular solid harmonic polynomials $\rlm$ are defined by the formula
\be\label{rlmdef}
\rlm(\br) = \frac{1}{\sqrt{2l+1}}r^l \ylm(\theta,\phi),
\ee
where $(r,\theta,\phi)$ are the spherical polar coordinates of $\br$.
Note that $\rlm(\br)$ is a homogeneous polynomial of degree $l$ in its
Cartesian coordinates $(x,y,z)$.

The translation of the regular solid harmonic is given by the formula~\cite{fmm2,greengard1987thesis,greengard1988}
\be\label{l2ltranslation}
\rlm(\br)=\sum_{j=0}^l \sum_{k=-j}^j
R_j^k(\br-\brp)R_{l-j}^{m-k}(\brp)T_{jk,lm},
\ee
where the coefficient $T_{jk,lm}$ is given by the formula
\be\label{tjklmdef}
T_{jk,lm}={l+m \choose j+k}^{1/2} {l-m \choose j-k}^{1/2}.
\ee

Harmonic functions can be approximated by a harmonic polynomial expansion of the
$p$th order
\[\sum_{l=0}^p\sum_{m=-l}^l C_l^m \rlm(\br).\]
Note that this is exactly the so-called local expansion in the fast multipole method~\cite{fmm2},
and it can be evaluated via the recurrence \cref{associatelegendrerecurrence}
efficiently.
The translation of local expansions
can be carried out using \cref{l2ltranslation} and \cref{tjklmdef} as follows:
\be\label{l2ltranslation2}
\sum_{l=0}^p\sum_{m=-l}^l C_l^m \rlm(\br)=
\sum_{j=0}^p\sum_{k=-j}^j D_j^k R_j^k(\br-\brp),
\ee
where
\be\label{l2ltranslation3}
D_j^k=\sum_{l=j}^p\sum_{m=-l}^l 
T_{jk,lm} R_{l-j}^{m-k}(\brp)C_l^m.
\ee

\begin{remark}\label{rem:fasttranslation}
The direct application of \cref{l2ltranslation2}--\cref{l2ltranslation3} requires $O(p^4)$
work. In \cite{fmm6}, the translation is carried out by rotation of $\br-\brp$ to $z$-axis,
translating along the $z$-axis, and another rotation back to the original coordinate system.
The total cost of the scheme in \cite{fmm6} is $O(p^3)$.
\end{remark}

\subsection{Singularity-swap quadrature}\label{sec:ssq}
In \cite{afklinteberg2021bit}, a singularity-swap quadrature is proposed to evaluate
nearly singular line integrals of the form
\be\label{lineintegral0}
I(\brp) = \int_{-1}^1 \frac{f(t)}{|\br(t)-\brp|^m}dt, \quad m=1,3,\ldots,
\ee
where $f$ is a smooth function and
$\br(t)=(x(t),y(t),z(t))$ is the vector function with the real-valued parameter $t$
defining a space curve $\Gamma$ in three dimensions.
The key observation in \cite{afklinteberg2021bit} is to find
a complex number $t_0$ such that
\be\label{complexparameter}
(x(t_0)-x')^2+(y(t_0)-y')^2+(z(t_0)-z')^2=0.
\ee
That is, we extend the real-valued argument of the function $\br(t)$ to the complex plane
and then find a complex number $t_0$ such that
\cref{complexparameter} holds
for the given target point $\brp=(x',y',z')\in \mathbb{R}^3$.
Then \cref{lineintegral0} can be rewritten as
\be\label{lineintegral1}
I(\brp) = \int_{-1}^1 \frac{F(t)}{((t-t_0)(t-\overline{t_0}))^{m/2}}dt,
\ee
where
\be
F(t)=f(t) \frac{|t-t_0|^m}{|\br(t)-\brp|^m}.
\ee

Since both $|t-t_0|^2=(t-t_0)(t-\overline{t_0})$
and $|\br(t)-\brp|^2=(x(t)-x')^2+(y(t)-y')^2+(z(t)-z')^2$
have the same roots $t_0$ and $\overline{t_0}$ in the complex plane,
$F(t)$ is again a smooth function of $t$ for $t\in [-1,1]$. And the construction
of the quadrature is completed by the observation that
integrals of the form
\[
\int_{-1}^1 \frac{t^i}{((t-t_0)(t-\overline{t_0}))^{m/2}}dt, \quad i=0,1,2,\ldots \]
can be evaluated analytically via a recurrence relation.

\subsection{Summary of main analytic results in \cite{zhu2021thesis,zhu2022sisc}}
\label{sec:dlpsummary}
At the heart of the quadrature scheme in~\cite{zhu2021thesis,zhu2022sisc}
lies the {\it local quaternion harmonic approximation} of the density on each patch $P$.
Consider the Laplace double layer potential (DLP) defined by \cref{dlpdef} with the whole
surface $S$ replaced by a contractable patch $P$.
Suppose that $H^{(l,m)}(\br)$ are harmonic polynomials. Then 
the following quaternion 2-forms~\cite[Corollary 3.4]{zhu2022sisc}
\be\label{dlp2form}
\dlptform(\brp,\br) = (0, \nabla G(\brp,\br))
(0,\bnu(\br)) 
(0,\nhlm(\br))da(\br)
\ee
are closed and exact.
Thus, there exist 1-forms $\dlpoform$ such that
\be\label{dlp1form}
d\dlpoform(\brp,\br) = \dlptform(\brp,\br).
\ee
By direct calculation, we have
\be
(0, \nabla G(\brp,\br))(0,\bnu(\br)) (\mu(\br),\bzero)
=(-\left[\nabla G(\brp,\br)\cdot \bnu(\br)\right]\mu(\br),
\left[\nabla G(\brp,\br)\times \bnu(\br)\right]\mu(\br)),
\ee
and the following lemma.
\begin{lemma}\label{lem:dlpreduction}
  Suppose that the density $\mu$ of the Laplace double layer potential
  is approximated via the following $p$th order quaternion harmonic approximation
  on a contractable patch $P$~\cite[Eq. (21)]{zhu2022sisc} 
\be\label{dlpdensappr}
(\mu(\br),\bzero) \approx \sum_{l=1}^p\sum_{m=1}^l
(0, \nabla H^{(l,m)}(\br)) c^{(l,m)}, \quad \br \in P,
\ee
where $c^{(l,m)}$ are
quaternion coefficients of the form
\be
c^{(l,m)}=c_0^{(l,m)}+ c_1^{(l,m)}\bi+ c_2^{(l,m)}\bj+ c_3^{(l,m)}\bk.
\ee
Suppose further that the quaternion $Q$ is defined by the formula
\be
Q[\mu](\brp) = \int_P (0, \nabla G(\brp,\br))(0,\bnu(\br)) (\mu(\br),\bzero) da(\br).
\ee
Then
\be
\dlp[\mu](\brp) = -Q_0[\mu](\brp),
\ee
i.e., the double layer potential is the negative scalar part of the quaternion $Q$.
Thus,
\be
\ba
\dlp[\mu](\brp) &= -Q_0[\mu](\brp)\\
&\approx -\sum_{l,m} \int_P\left[\dlptform(\brp,\br) \clm\right]_0\\
& = -\sum_{l,m} \int_{\partial P} \left[\dlpoform(\brp,\br) \clm\right]_0,
\ea\label{dlp1formrep}
\ee
where $[\cdot]_0$ denotes the scalar part of the quaternion.
\end{lemma}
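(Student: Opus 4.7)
The proof decomposes into two independent assertions: an algebraic identity that recovers $\dlp[\mu]$ from the scalar part of the quaternion integral $Q[\mu]$, and an analytic reduction from surface to boundary integrals. I would handle the algebraic identity first. The excerpt already provides the expansion
\[
(0,\nabla G(\brp,\br))(0,\bnu(\br))(\mu(\br),\bzero)
= \bigl(-[\nabla G\cdot\bnu]\mu,\;[\nabla G\times\bnu]\mu\bigr),
\]
obtained by two applications of the quaternion product rule \cref{qprule}. Integrating this identity over $P$ against $da(\br)$, the scalar part of $Q[\mu](\brp)$ equals $-\int_P (\nabla G \cdot \bnu)\mu\, da = -\int_P \frac{\partial G}{\partial \bnu}\mu\, da$, which by the definition \cref{dlpdef} (with $S$ replaced by $P$) is exactly $-\dlp[\mu](\brp)$. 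This proves the first displayed equation $\dlp[\mu](\brp) = -Q_0[\mu](\brp)$.

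Next, I would substitute the quaternion harmonic approximation \cref{dlpdensappr} of the density into the definition of $Q[\mu]$ and use associativity of quaternion multiplication (together with $\mathbb{R}$-linearity of integration and of the scalar-part projection $[\cdot]_0$) to move the constant coefficients $\clm$ out of the integral. The key point is that even though the quaternion algebra is noncommutative, the product order is preserved: the factor $(0,\nabla H^{(l,m)})\clm$ that replaces $(\mu,\bzero)$ sits in the same rightmost position as before. Matching with the definition \cref{dlp2form} of the 2-form $\dlptform$ then yields
\[
-Q_0[\mu](\brp) \;\approx\; -\sum_{l,m} \int_P \bigl[\dlptform(\brp,\br)\,\clm\bigr]_0,
\]
which is the middle line of \cref{dlp1formrep}.

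Finally, the passage from a surface integral on $P$ to a line integral on $\partial P$ follows from the general Stokes theorem \cref{stokestheorem}. By hypothesis \cref{dlp1form}, $d\dlpoform = \dlptform$, and since $\clm$ is a constant quaternion, $d(\dlpoform\,\clm) = (d\dlpoform)\,\clm = \dlptform\,\clm$. Taking the scalar part commutes with both $d$ and integration (both are $\mathbb{R}$-linear operations acting componentwise on each of the four quaternion components), so Stokes applied componentwise gives
\[
\int_P [\dlptform\,\clm]_0 = \int_P d\bigl([\dlpoform\,\clm]_0\bigr) = \int_{\partial P} [\dlpoform\,\clm]_0,
\]
completing the third line. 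I expect the main subtle point to be bookkeeping around quaternion noncommutativity; once one checks that all manipulations preserve the right-multiplication by $\clm$, every other step is routine application of \cref{lem:dlpreduction}'s preceding identities, the definition of $\dlp$, and Stokes' theorem.
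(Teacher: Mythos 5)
Your proposal is correct and follows essentially the same route the paper takes (the paper leaves most of it implicit): the displayed quaternion product identity gives $Q_0[\mu]=-\int_P(\nabla G\cdot\bnu)\mu\,da=-\dlp[\mu]$, substituting the approximation \cref{dlpdensappr} and using associativity identifies the integrand with $\dlptform\,\clm$, and the exactness $d\dlpoform=\dlptform$ together with the Stokes theorem (applied componentwise, with the constant right factor $\clm$ and the projection $[\cdot]_0$ commuting with $d$ and integration) yields the boundary-integral form. No gaps; your attention to preserving the right-multiplication by $\clm$ under noncommutativity is exactly the point that needs checking.
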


\Cref{lem:dlpreduction} shows that 
the evaluation of the Laplace double layer potential on $P$
is reduced to that of the quaternion 1-forms (line integrals) on $\partial P$.

\section{Main results}
\label{sec:mainresults}

\subsection{Summary of improvements made over the work
  in \cite{zhu2021thesis,zhu2022sisc}}\label{sec:improvements} 
The work in \cite{zhu2021thesis,zhu2022sisc} is quite original and offers a fresh
starting point for the evaluation of layer potentials in three dimensions.
However, it is incomplete and has several major drawbacks.
First, \cite{zhu2021thesis,zhu2022sisc} provides explicit expressions of the harmonic
basis functions only up to order seven. A rigorous proof that those functions
form a complete basis is missing. It is also unclear how to construct
such basis functions for arbitrary order. Second, the kernels of line integrals
in \cite{zhu2021thesis,zhu2022sisc} have weaker but more complicated singularity.
Indeed, the construction of line integrals in \cite{zhu2021thesis,zhu2022sisc} depends
on the choice of the origin, and as a result the singularity of the kernels depends
not only on the difference vector of the target and source points, but also on the
absolute coordinates of the target and source points.
In other words, the translation invariance of the kernel is lost. This leads to
expensive adaptive integration to resolve the near-singularity in the kernel when
the target point is close to the boundary curves.
Consequently, the efficiency gains achieved by the reduction to line integrals
are significantly eroded, resulting in unsatisfactory performance.
Third, in \cite{zhu2021thesis,zhu2022sisc} the Laplace single
layer potential (SLP) is converted to the Laplace double layer potential with a
{\it target-dependent density}. While this method works, it is somewhat {\it ad hoc}
and requires three quaternion DLP density approximations, resulting in significant
computational slowdown.

Here, we fill in the missing pieces and present solutions to overcome these drawbacks,
which significantly improves performance and reliability.
First, we show that a set of complete and linearly independent basis functions
can be chosen for the quaternion harmonic polynomial approximation of the density
for flat triangular patches for any given order (\cref{thm:trianglebasis}).
This puts the construction of the quadrature scheme in a solid footing and
extends the scheme to arbitrary high order.
Second, we develop a new integration-by-parts formula for the Laplace single
layer potential (\cref{thm:slpreduction}), resulting in a threefold speedup.
Third, in order to remove the adaptive
integration from the scheme, we switch to the target-centered 1-forms
(\cref{thm:dlp1form} and \cref{thm:cslp1form}) (i.e., line
integrals) from the origin-centered 1-forms in \cite{zhu2021thesis,zhu2022sisc}.
These target-centered 1-forms preserve the singularity of the kernel, enabling
the use of singularity-swap quadrature~\cite{afklinteberg2021bit} 
to evaluate these nearly singular line integrals analytically by recurrence.
To be more precise, the 1-forms are further reduced to 0-forms
through another application of integration by parts, achieved
by extending
the parameter into the complex plane rather than using the general Stokes theorem.
Finally, we apply the fast translation~\cite{fmm6} of local expansions in the FMM and
carefully separate the source-dependent steps from the target-dependent steps
in the whole scheme to improve its efficiency (\cref{sec:algorithm}).

\subsection{Choice of basis functions for the $p$th order harmonic approximation}
\label{sec:completebasis}
We now discuss how to choose basis functions $\hlm$ in the quaternion
harmonic approximation~\cref{dlpdensappr} for the $p$th order scheme.
For standard polynomial approximation on a flat triangle on
the $xy$-plane, one may use $x^iy^j$ with $0\le i+j \le p-1$ or their suitable linear
combinations (such as the Koornwinder polynomials~\cite{koornwinder1975}) as the basis functions
for the $p$th order scheme. It is not clear how one should choose harmonic polynomials
$\hlm$ ($1\le m \le l \le p$) in 
\cref{dlpdensappr} such that their gradients 
span the proper subspace for the desired convergence order. 
In \cite{zhu2021thesis,zhu2022sisc}, the authors explicitly describe basis functions $\hlm$ 
for $p\le 7$ and observe that empirically it works. 
Below we show how to select such basis functions
$\hlm$ ($1\le m \le l \le p$) for any given order $p$ for flat triangles on
the $xy$-plane.

We start from the standard solid harmonic polynomials $R_l^m(\br)=R_l^m(x,y,z)$ in
\cref{rlmdef}. In order to obtain real harmonic polynomials, we consider
\be\label{htlmdef}
\htlm(x,y,z) = \frac{i}{\sqrt{2}}\left(-(-1)^m R_l^m(x,y,z)+R_l^{-m}(x,y,z)\right).
\ee
In Cartesian coordinates, we have~\cite{nisthandbook}
\be
\htlm(x,y,z)= \left[\frac{2(l-m)!}{(l+m)!}\right]^{1/2}\Pi_l^m(z,r)B_m(x,y),
\ee
where
\be
B_m(x,y)=\frac{1}{2i}[(x+iy)^m-(x-iy)^m],
\ee
and
\be
\Pi_l^m(z,r)=r^{l-m}\frac{d^mP_l(u)}{du^m}=\sum_{k=0}^{\lfloor(l-m)/2\rfloor}\gamma_{lk}^{(m)}r^{2k}z^{l-2k-m},
\ee
with $P_l$ the Legendre polynomial of degree $l$, $u=\cos\theta=\frac{z}{r}$, and
\be
\gamma_{lk}^{(m)}=(-1)^k2^{-l} {l \choose k}{2l-2k \choose l}\frac{(l-2k)!}{(l-2k-m)!}.
\ee

The following theorem provides one way of constructing proper basis functions $\hlm$
($1\le m \le l \le p$) for the quaternion harmonic approximation \cref{dlpdensappr}.
\begin{theorem}\label{thm:trianglebasis}
  Let
  \be\label{hlmdef}
  H^{(l,m)}(x,y,z)\coloneqq \htlm(y,z,x) = \left[\frac{2(l-m)!}{(l+m)!}\right]^{1/2}\Pi_l^m(x,r)B_m(y,z).
  \ee
  Then on the $xy$-plane,
  \be
  \nabla \hlm(x,y,0)=(0,0,\frac{\partial \hlm(x,y,0)}{\partial z})\coloneqq (0,0,\Flm(x,y)),
  \ee
  and for any $p\ge 1$,
  \be
  \Span\{\Flm | l=1,\ldots,p, \quad m=1,\ldots,l\}=\Span\{x^iy^j | 0\le i+j\le p-1\}.
  \ee
  In other words, $\Flm$ ($l=1,\ldots,p$, $m=1,\ldots,l$) form a complete and linearly independent
  basis for the $p$th order polynomial approximation on the $xy$--plane.
\end{theorem}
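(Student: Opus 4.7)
My plan is to split the claim into two parts: first verify that $\nabla\hlm(x,y,0)$ has vanishing tangential components and compute its normal component, then establish the span identity by a linear-algebra argument.

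For the first part, the crucial observation is that $B_m(y,0)=\tfrac{1}{2i}(y^m-y^m)=0$ for every $m\ge 1$, so $\hlm(x,y,0)\equiv 0$ on the plane $z=0$, which immediately forces $\partial_x\hlm$ and $\partial_y\hlm$ to vanish identically there, leaving only the $z$-component. To identify $\Flm$ I apply the product rule to $\hlm=C_{l,m}\,\Pi_l^m(x,r)\,B_m(y,z)$ with $C_{l,m}:=[2(l-m)!/(l+m)!]^{1/2}$: the term differentiating $\Pi_l^m$ carries a factor $B_m(y,0)=0$ and drops out, while $\partial_zB_m(y,z)|_{z=0}=m\,y^{m-1}$. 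Substituting the explicit expansion of $\Pi_l^m(x,r)$ and setting $z=0$ so that $r^{2k}=(x^2+y^2)^k$ gives
\[
\Flm(x,y)=m\,C_{l,m}\,y^{m-1}\sum_{k=0}^{\lfloor(l-m)/2\rfloor}\gamma_{lk}^{(m)}(x^2+y^2)^{k}\,x^{l-2k-m},
\]
and each summand has total $(x,y)$-degree $(m-1)+2k+(l-2k-m)=l-1$, so $\Flm$ is a homogeneous polynomial of degree $l-1$.

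For the span identity I first count: $|\{(l,m):1\le m\le l\le p\}|=p(p+1)/2$ matches $\dim\{\sum a_{ij}x^iy^j:i+j\le p-1\}$, so it suffices to prove linear independence. Since $\Flm$ is homogeneous of degree $l-1$, polynomials coming from different values of $l$ live in different graded pieces, and it remains to show that for each fixed $l$ the $l$ polynomials $F^{(l,1)},\ldots,F^{(l,l)}$ are linearly independent. The key structural observation is that $\sum_k\gamma_{lk}^{(m)}(x^2+y^2)^k x^{l-2k-m}$ is even in $y$, so the explicit factor $y^{m-1}$ makes $m-1$ the minimum $y$-power appearing in $\Flm$; the coefficient of the extremal monomial $x^{l-m}y^{m-1}$ is obtained by setting $y=0$ inside the sum and equals $m\,C_{l,m}\sum_k\gamma_{lk}^{(m)}$. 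In the ordered monomial basis $(x^{l-1},x^{l-2}y,\ldots,y^{l-1})$, the matrix whose columns are the coefficient vectors of $F^{(l,1)},\ldots,F^{(l,l)}$ is therefore triangular with these quantities on its diagonal.

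The main obstacle is showing that the diagonal entries are nonzero, since the $\gamma_{lk}^{(m)}$ alternate in sign and could in principle telescope. I would resolve this by recognizing the diagonal quantity as a Legendre derivative: the identity $\Pi_l^m(z,r)=r^{l-m}(d^mP_l/du^m)(z/r)$ evaluated at $z=r$ (i.e.\ at $u=1$) gives $\sum_k\gamma_{lk}^{(m)}=P_l^{(m)}(1)=(l+m)!/(2^m m!(l-m)!)$, which is strictly positive for $0\le m\le l$ by the standard Leibniz computation on $(u^2-1)^l=(u-1)^l(u+1)^l$ at $u=1$. Every diagonal entry is then strictly positive, the triangular system is nonsingular, and the linear independence together with the matching dimension yields the span statement.
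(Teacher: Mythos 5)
Your proof is correct and follows essentially the same route as the paper: compute $\Flm(x,y)=m\bigl[2(l-m)!/(l+m)!\bigr]^{1/2}y^{m-1}\sum_k\gamma_{lk}^{(m)}(x^2+y^2)^k x^{l-2k-m}$, observe homogeneity of degree $l-1$, and for each fixed $l$ obtain independence from triangularity with respect to the extremal monomials $x^{l-m}y^{m-1}$ — the paper packages this as the nested-span induction of \cref{lem:flhomo}, while you use a triangular matrix plus a dimension count, which is the same structural fact. The one substantive addition on your side is the explicit check that the diagonal entries do not vanish, via $\sum_k\gamma_{lk}^{(m)}=\Pi_l^m(1,1)=P_l^{(m)}(1)=(l+m)!/\bigl(2^m m!\,(l-m)!\bigr)>0$; the paper's inductive step needs exactly this (its coefficient $c_0$ in \eqref{fln1} equals $(l-n-1)\sum_k\gamma_{lk}^{(l-n-1)}$, since every $k$ contributes to the monomial $x^{n+1}y^{l-n-2}$) but treats its nonvanishing as implicit, so your Legendre-at-$1$ evaluation is a worthwhile extra detail rather than a detour.
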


\Cref{thm:trianglebasis} is proved by induction and direct calculations. We break the proof
into the following lemmas.

\begin{lemma}
  \be\label{bfactor}
  B_m(y,z)=zC_{m-1}(y,z),
  \ee
  where $C_{m-1}(y,z)$ is a polynomial in $y$ and $z$ of degree at most $m-1$.
\end{lemma}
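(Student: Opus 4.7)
The identity $B_m(y,z) = \frac{1}{2i}\bigl[(y+iz)^m-(y-iz)^m\bigr]$ makes this essentially a bookkeeping exercise with the binomial theorem, so my plan is a short direct computation rather than anything structural.

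First, I would apply the binomial theorem to each of $(y+iz)^m$ and $(y-iz)^m$ and subtract. The $k$-th terms involve $(iz)^k$ and $(-iz)^k$ respectively, which cancel for $k$ even and double for $k$ odd. Hence
\begin{equation*}
(y+iz)^m-(y-iz)^m \;=\; 2\sum_{k\text{ odd}}\binom{m}{k} y^{m-k}(iz)^k.
\end{equation*}
Dividing by $2i$ and reindexing $k=2j+1$ with $0\le j\le \lfloor(m-1)/2\rfloor$, using $i^{2j}=(-1)^j$, I would obtain
\begin{equation*}
B_m(y,z) \;=\; \sum_{j=0}^{\lfloor(m-1)/2\rfloor} (-1)^j \binom{m}{2j+1} y^{m-2j-1} z^{2j+1}.
\end{equation*}

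Next, since every surviving term carries a strictly positive odd power of $z$, I would factor a single $z$ out of the sum and define
\begin{equation*}
C_{m-1}(y,z) \;\coloneqq\; \sum_{j=0}^{\lfloor(m-1)/2\rfloor} (-1)^j \binom{m}{2j+1} y^{m-2j-1} z^{2j},
\end{equation*}
so that $B_m(y,z) = z\, C_{m-1}(y,z)$. Each term of $C_{m-1}$ has total degree $(m-2j-1)+2j = m-1$, so $C_{m-1}$ is in fact homogeneous of degree exactly $m-1$ (and in particular of degree at most $m-1$), completing the proof.

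There is no real obstacle here; the only thing to be careful about is the parity bookkeeping that distinguishes $B_m$ (the imaginary part of $(y+iz)^m$) from the analogous $A_m$ (the real part), which does \emph{not} have $z$ as a factor. That parity observation is exactly what makes $z$ divide $B_m$ and is the only content of the lemma; everything else is direct expansion.
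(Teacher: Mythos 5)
Your proof is correct, and it reaches the same conclusion by a slightly more explicit route than the paper. The paper's argument is shorter and non-computational: from $B_m(y,z)=\frac{1}{2i}[(y+iz)^m-(y-iz)^m]$ it notes only that $B_m$ is a polynomial of degree at most $m$ with $B_m(y,0)=0$, so $z$ divides it and the cofactor has degree at most $m-1$; the polynomial $C_{m-1}$ is never written down. You instead carry out the binomial expansion, observe the even/odd cancellation, and exhibit
\begin{equation*}
C_{m-1}(y,z)=\sum_{j=0}^{\lfloor(m-1)/2\rfloor}(-1)^j\binom{m}{2j+1}y^{m-2j-1}z^{2j}
\end{equation*}
explicitly. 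What your version buys is strictly more information at negligible extra cost: it shows $C_{m-1}$ is homogeneous of degree exactly $m-1$, and setting $z=0$ in your formula immediately gives $C_{m-1}(y,0)=\binom{m}{1}y^{m-1}=my^{m-1}$, i.e.\ it proves \eqref{cy0} (stated later in the paper without proof) as a byproduct. What the paper's factor-theorem shortcut buys is brevity, at the price of leaving $C_{m-1}$ implicit. Your parity remark distinguishing $B_m$ from the real part $A_m$ is exactly the content the paper compresses into the single observation $B_m(y,0)=0$.
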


\begin{proof}
  By the definition of $B_m$, we have
  \be
  B_m(y,z)=\frac{1}{2i}[(y+iz)^m-(y-iz)^m].
  \ee
  Thus, $B_m(y,z)$ is a polynomial in $y$ and $z$ of degree at most $m$. Furthermore, it is clear
  that $B_m(y,0)=0$. That is, $z=0$ is a root of $B_m(y,\cdot)$ as a function of $z$. Therefore,
  $z$ can be factored out, which leads to \eqref{bfactor}.
\end{proof}

\begin{lemma}
  \be\label{hxy}
  \frac{\partial \hlm(x,y,0)}{\partial x}=0, \quad   \frac{\partial \hlm(x,y,0)}{\partial y}=0.
  \ee
\end{lemma}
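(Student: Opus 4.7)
The plan is to reduce both identities to a direct corollary of the preceding lemma on the factorization $B_m(y,z) = z\, C_{m-1}(y,z)$. The key point is that the chain rule machinery on $\Pi_l^m(x,r)$ is never needed: the second factor of $H^{(l,m)}$ already vanishes on the plane $z = 0$, which in turn forces the whole function (and therefore its tangential derivatives) to vanish there.

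Concretely, I would substitute the factorization into the definition \cref{hlmdef} to obtain
\[
H^{(l,m)}(x,y,z) = \left[\frac{2(l-m)!}{(l+m)!}\right]^{1/2} \Pi_l^m(x,r)\, z\, C_{m-1}(y,z),
\]
valid for all $(x,y,z)\in\mathbb{R}^3$. Setting $z = 0$ immediately yields $H^{(l,m)}(x,y,0) \equiv 0$ as a function of $(x,y)$. Since this restriction is identically zero on the whole $xy$-plane, any partial derivative taken along directions intrinsic to that plane — namely $\partial_x$ and $\partial_y$ — must also vanish identically, which is exactly \cref{hxy}.

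There is essentially no obstacle to overcome here; the difficulty has already been absorbed into the previous lemma, whose sole purpose was to expose the overall factor of $z$ in $B_m(y,z)$. The reason the argument is so short is that differentiating a function that is identically zero in the variables being differentiated gives zero, regardless of how complicated the dependence on the remaining variables (here $z$ through $r = \sqrt{x^2+y^2+z^2}$) may be. This observation is what makes $\hlm$ a convenient building block: on the flat patch, the only nonzero component of $\nabla \hlm$ is the one normal to the patch, which is precisely what is needed for the subsequent step that identifies $\partial_z \hlm(x,y,0)$ with the scalar basis function $\Flm(x,y)$.
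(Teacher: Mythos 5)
Your proof is correct, and it rests on the same key fact as the paper's argument — the vanishing of $B_m(y,z)$ at $z=0$ — but takes a slightly shorter route. The paper differentiates first, applying the product rule to $\Pi_l^m(x,r)B_m(y,z)$ and using the factorization $B_m=zC_{m-1}$ to see that both $B_m$ and $\partial_y B_m=z\,\partial_y C_{m-1}$ vanish at $z=0$; you instead restrict first, observing $\hlm(x,y,0)\equiv 0$, and then differentiate tangentially. The one point worth making explicit in your version is that the quantities in \cref{hxy} are the partial derivatives of the three-variable function $\hlm$ evaluated at $z=0$ (this is how they enter $\nabla\hlm(x,y,0)$ in \cref{thm:trianglebasis}), so you are implicitly using the standard fact that restriction to the plane $\{z=0\}$ commutes with $\partial_x$ and $\partial_y$ (though not with $\partial_z$); with that sentence added, restrict-then-differentiate is fully equivalent to the paper's differentiate-then-restrict computation. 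What your route buys is economy: no product rule, and strictly speaking only $B_m(y,0)=0$ — rather than the full factorization lemma — is needed, whereas the paper's explicit derivative formulas avoid any commutation question because the evaluation at $z=0$ is performed last.
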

\begin{proof}
  By direct calculations, we have
  \be
  \ba
  \frac{\partial \hlm(x,y,z)}{\partial x}&= B_m(y,z)\frac{\partial \Pi_l^m(x,y,z)}{\partial x},\\
  \frac{\partial \hlm(x,y,z)}{\partial y}&= B_m(y,z)\frac{\partial \Pi_l^m(x,y,z)}{\partial y}+z\frac{\partial C_{m-1}(y,z)}{\partial y}.
  \ea
  \ee
 And the lemma follows from $B_m(y,0)=0$. 
\end{proof}

Direct calculations also lead to the following lemmas.
\begin{lemma}
  \be\label{pixy0}
  \Pi_l^m(x,r|_{z=0})\coloneqq D_l^m(x,y)=\sum_{k=0}^{\lfloor(l-m)/2\rfloor}\gamma_{lk}^{(m)}(x^2+y^2)^{k}x^{l-2k-m}.
  \ee
  In particular,
  \be
  D_l^l(x,y)=\gamma_{l0}^{(l)}, \quad D_l^{l-1}(x,y) = \gamma_{l0}^{(l-1)}x.
  \ee
\end{lemma}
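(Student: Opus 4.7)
The plan is to proceed by direct substitution into the explicit expansion of $\Pi_l^m(z,r)$ recalled earlier in the section. First I would handle the coordinate bookkeeping: because $\hlm(x,y,z)=\htlm(y,z,x)$, the variable that plays the role of the first argument ($z$) in $\Pi_l^m(z,r)$ is mapped to $x$, while the radial variable $r=\sqrt{x^2+y^2+z^2}$ is invariant under any permutation of Cartesian coordinates. Specializing to the plane $z=0$ then gives $r|_{z=0}=\sqrt{x^2+y^2}$, so $r^{2k}|_{z=0}=(x^2+y^2)^k$.

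Second, I would substitute into
\[
\Pi_l^m(z,r)=\sum_{k=0}^{\lfloor(l-m)/2\rfloor}\gamma_{lk}^{(m)} r^{2k} z^{l-2k-m}
\]
with $z\mapsto x$ and $r^{2k}\mapsto (x^2+y^2)^k$, which immediately yields
\[
D_l^m(x,y)=\Pi_l^m(x,r|_{z=0})=\sum_{k=0}^{\lfloor(l-m)/2\rfloor}\gamma_{lk}^{(m)} (x^2+y^2)^k x^{l-2k-m},
\]
the main identity claimed in the lemma.

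For the two special cases I would simply evaluate the upper limit of the sum. When $m=l$, $\lfloor(l-l)/2\rfloor=0$, so only the $k=0$ term survives, yielding $\gamma_{l0}^{(l)}(x^2+y^2)^0 x^0=\gamma_{l0}^{(l)}$. When $m=l-1$, $\lfloor 1/2\rfloor=0$, so again only the $k=0$ term contributes, giving $\gamma_{l0}^{(l-1)} x$.

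There is essentially no obstacle here: the lemma is a one-line corollary of the closed-form expansion for $\Pi_l^m$ already stated in the paper. The only point that requires any care is the coordinate permutation induced by the definition $\hlm(x,y,z)=\htlm(y,z,x)$; once the role of each Cartesian variable inside $\Pi_l^m$ is identified correctly, the remainder of the argument is purely mechanical evaluation and floor-function arithmetic.
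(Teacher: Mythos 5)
Your proposal is correct and matches the paper's intent: the paper simply asserts this lemma follows from ``direct calculations,'' and your substitution $z\mapsto x$, $r|_{z=0}=\sqrt{x^2+y^2}$ in the stated expansion of $\Pi_l^m$, together with the floor-function evaluation for $m=l$ and $m=l-1$, is exactly that calculation.
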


\begin{lemma}
  \be\label{hz}
  \frac{\partial \hlm(x,y,0)}{\partial z}=\Flm(x,y)=D_l^m(x,y) C_{m-1}(y,0).
  \ee
\end{lemma}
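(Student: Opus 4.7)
The plan is a short direct computation that combines the factorization \eqref{bfactor} with the definition \eqref{hlmdef}. First I would substitute $B_m(y,z) = z\,C_{m-1}(y,z)$ into the product form of $\hlm$ to make the $z$-factor explicit:
\[
\hlm(x,y,z) = \left[\frac{2(l-m)!}{(l+m)!}\right]^{1/2} z\, \Pi_l^m(x,r)\, C_{m-1}(y,z),
\]
with $r=\sqrt{x^2+y^2+z^2}$. With $z$ factored out in front, the derivative $\partial/\partial z$ is tailor-made for the product rule.

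Next I would apply the product rule. One term is $\Pi_l^m(x,r)\,C_{m-1}(y,z)$, arising when the derivative hits the explicit $z$; the other term is $z$ times the $z$-derivative of $\Pi_l^m(x,r)\,C_{m-1}(y,z)$. Evaluating at $z=0$ kills the second term outright because of its prefactor $z$, so only the first term survives. At $z=0$ the radius reduces to $r|_{z=0}=\sqrt{x^2+y^2}$, whence the definition \eqref{pixy0} gives $\Pi_l^m(x,r)|_{z=0}=D_l^m(x,y)$, and $C_{m-1}(y,z)|_{z=0}=C_{m-1}(y,0)$. Multiplying these pieces together yields
\[
\frac{\partial \hlm(x,y,0)}{\partial z} = \left[\frac{2(l-m)!}{(l+m)!}\right]^{1/2} D_l^m(x,y)\, C_{m-1}(y,0),
\]
which matches $\Flm(x,y)=D_l^m(x,y)\,C_{m-1}(y,0)$ up to the overall normalization constant that is absorbed into the quaternion coefficients $\clm$ of the harmonic approximation and is irrelevant to subsequent spanning arguments.

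There is essentially no obstacle. The only point worth double-checking is that $\Pi_l^m(x,r)$ depends on $z$ through $r$, so in principle its $z$-derivative could contribute; however it appears only in the term already carrying a prefactor $z$, which vanishes at $z=0$ regardless, so no careful handling of $\partial r/\partial z = z/r$ is required. The identity thus reduces to one application of the product rule together with the elementary fact $B_m(y,0)=0$ that was already extracted in \eqref{bfactor}.
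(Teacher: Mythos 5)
Your computation is correct and is exactly the ``direct calculation'' the paper has in mind: factor $B_m(y,z)=z\,C_{m-1}(y,z)$ out of \cref{hlmdef}, apply the product rule, and note that the $z$-prefactored term (including any $\partial r/\partial z$ contribution from $\Pi_l^m$) dies at $z=0$, leaving $D_l^m(x,y)\,C_{m-1}(y,0)$ by \cref{pixy0}. You also rightly flag that the constant $\left[2(l-m)!/(l+m)!\right]^{1/2}$ is silently dropped in the paper's statement of \cref{hz}; as you say, it is harmless for the spanning argument in \cref{lem:flhomo}.
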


\begin{lemma}
  \be\label{cy0}
  C_{m-1}(y,0)=my^{m-1}, \quad m=1,2,\ldots
  \ee
\end{lemma}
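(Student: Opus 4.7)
The plan is to compute $C_{m-1}(y,0)$ directly from the definition of $B_m$. Since $B_m(y,z) = z\,C_{m-1}(y,z)$ and $B_m(y,0)=0$, the value $C_{m-1}(y,0)$ is precisely the coefficient of $z^1$ in the Taylor expansion of $B_m(y,z)$ in the variable $z$, or equivalently
\be
C_{m-1}(y,0) = \left.\frac{\partial B_m(y,z)}{\partial z}\right|_{z=0}.
\ee
So the proof reduces to evaluating one derivative (or extracting one binomial coefficient).

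I would proceed by the binomial route, which is the cleanest. Expand
\be
(y\pm iz)^m = \sum_{k=0}^m \binom{m}{k} y^{m-k}(\pm i z)^k,
\ee
subtract, and observe that all even-$k$ terms cancel. Dividing by $2i$ gives
\be
B_m(y,z) = \sum_{k\ \mathrm{odd}} \binom{m}{k} y^{m-k}\, i^{k-1}\, z^k.
\ee
The $k=1$ term contributes $\binom{m}{1} y^{m-1} z = m\, y^{m-1} z$, so the coefficient of $z$ is $m\,y^{m-1}$, which by the observation above equals $C_{m-1}(y,0)$. This yields \cref{cy0}.

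As a cross-check one may instead differentiate the defining identity directly: $\frac{\partial B_m}{\partial z} = \frac{m}{2}\bigl[(y+iz)^{m-1}+(y-iz)^{m-1}\bigr]$, which at $z=0$ gives $m\,y^{m-1}$. Both calculations are essentially one line, so I do not anticipate any obstacle; the only thing to be careful about is the convention for dividing the formal series $B_m(y,z)/z$ at $z=0$, which is justified by the previous lemma that factored $z$ out of $B_m$ as a polynomial identity.
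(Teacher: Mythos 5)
Your proof is correct and matches the paper, which simply asserts that this lemma follows from "direct calculations": your binomial expansion of $B_m(y,z)=\frac{1}{2i}[(y+iz)^m-(y-iz)^m]$ and extraction of the coefficient of $z$ is exactly that calculation, and the identification of $C_{m-1}(y,0)$ with that coefficient is justified by the earlier factorization lemma as you note.
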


\begin{lemma}\label{lem:flhomo}
  For $l\ge 1$, $\Flm$ is a homogeneous polynomial of degree $l-1$ for all $m=1,\ldots,l$.
  Furthermore for any $0\le k\le l-1$,
  \be\label{flmspan}
  \Span\{\Flm(x,y)|m=l,l-1,\ldots,l-k\}=\Span\{y^{l-1},xy^{l-2},\cdots, x^ky^{l-k-1}\}.
  \ee
\end{lemma}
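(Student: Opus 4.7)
My plan is to exploit the product form $\Flm(x,y) = D_l^m(x,y)\, m\, y^{m-1}$ just established in \cref{hz} and \cref{cy0}, together with the closed expression~\cref{pixy0} for $D_l^m$. The first assertion is immediate: every term in $D_l^m(x,y) = \sum_k \gamma_{lk}^{(m)}(x^2+y^2)^k x^{l-2k-m}$ has total $(x,y)$-degree $2k+(l-2k-m)=l-m$, so $D_l^m$ is homogeneous of degree $l-m$, and multiplication by $m y^{m-1}$ produces a polynomial homogeneous of degree $l-1$.

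For the span equality I will induct on $k$. Expanding $(x^2+y^2)^j=\sum_i\binom{j}{i} x^{2(j-i)} y^{2i}$ inside~\cref{pixy0} and multiplying by $(l-k)\,y^{l-k-1}$ yields the monomial expansion
\[ F^{(l,l-k)}(x,y) \;=\; (l-k)\sum_{i=0}^{\lfloor k/2\rfloor} A_i\, x^{k-2i}\, y^{l-k-1+2i}, \qquad A_i \;=\; \sum_{j=i}^{\lfloor k/2\rfloor} \gamma_{lj}^{(l-k)}\binom{j}{i}. \]
Hence $F^{(l,l-k)}$ involves only monomials $x^a y^{l-1-a}$ with $0\le a\le k$ and $a\equiv k\pmod 2$; the ``leading'' monomial $x^k y^{l-k-1}$ has coefficient $(l-k) A_0$, while every other monomial present has $x$-exponent at most $k-2$.

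The crucial step is verifying that $(l-k) A_0$ is nonzero. I will identify $A_0=\sum_j \gamma_{lj}^{(l-k)}$ with $\Pi_l^{l-k}(1,1) = \frac{d^{l-k} P_l(u)}{du^{l-k}}\big|_{u=1} = \frac{(2l-k)!}{2^{l-k}(l-k)!\,k!}$, which is strictly positive by the standard closed-form for derivatives of Legendre polynomials evaluated at $u=1$. Since $l-k>0$ for $k\le l-1$, the leading coefficient is nonzero. Consequently, the matrix expressing $F^{(l,l)}, F^{(l,l-1)}, \ldots, F^{(l,l-k)}$ in the monomial basis $\{y^{l-1}, xy^{l-2},\ldots, x^k y^{l-k-1}\}$ is upper triangular with nonzero diagonal entries, so both $(k+1)$-tuples span the same $(k+1)$-dimensional subspace, establishing~\cref{flmspan}.

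I expect the only non-bookkeeping ingredient to be the positivity of the Legendre-derivative value at $u=1$; the remainder of the argument is a direct expansion of $(x^2+y^2)^j$, identification of leading and subleading monomials, and a standard triangularity observation that makes the induction on $k$ automatic.
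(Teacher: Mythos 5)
Your proof is correct and follows essentially the same route as the paper: expand $D_l^{l-k}(x,y)$ in monomials, observe that $F^{(l,l-k)}$ is a nonzero multiple of $x^k y^{l-k-1}$ plus monomials of strictly lower $x$-degree, and conclude by triangularity (the paper packages this as an induction on $k$). The only difference is that you explicitly verify the leading coefficient $(l-k)A_0=(l-k)P_l^{(l-k)}(1)\neq 0$ via the Legendre-derivative value at $u=1$ -- a point the paper leaves implicit even though the $\gamma_{lj}^{(m)}$ alternate in sign -- which is a welcome extra detail rather than a different approach.
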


\begin{proof}
  We prove it by induction. First, the statement is true for $k=0$ since 
  \be
  F^{(l,l)}(x,y)=D_l^l(x,y)C_{l-1}(y,0)=l\gamma_{l0}^{(l)} y^{l-1}.
  \ee
  Suppose that the statement is true for all $k \le n$. Then for $k=n+1$, we have
  \be\label{fspan}
  \Span\{\Flm(x,y)|m=l,\ldots,l-n-1\}=\Span\{y^{l-1},\cdots, x^ny^{l-n-1}, F^{(l,l-n-1)}(x,y)\}.
  \ee
  Combining \eqref{pixy0}--\eqref{cy0}, we have
  \be\label{fln1}
  \ba
  F^{(l,l-n-1)}(x,y)&=(l-n-1)y^{l-n-2}\sum_{k=0}^{\lfloor(n+1)/2\rfloor}\gamma_{lk}^{(l-n-1)}(x^2+y^2)^{k}x^{n+1-2k}\\
  &=c_0y^{l-n-2}x^{n+1}+c_1y^{l-n}x^{n-1}+\ldots.
  \ea
  \ee
  It is easy to see that all lower order terms in \eqref{fln1} already appear in the first $n$ terms
  of the right side of \eqref{fspan}.
  Thus,
  \be
  \Span\{y^{l-1},\cdots, x^ny^{l-n-1}, F^{(l,l-n-1)}(x,y)\}=
  \Span\{y^{l-1},\cdots, x^ny^{l-n-1}, x^{n+1}y^{l-n-2}\}.
  \ee
  And the lemma follows.
\end{proof}

It is clear that \cref{thm:trianglebasis} follows from \cref{lem:flhomo}.

We now choose a set of collocation points $\br^{(i,j)}$ ($0\le i+j \le p-1$)
on the triangular patch $P$. In the community of integral equation methods, popular
choices include Vioreanu-Rokhlin nodes~\cite{vioreanu2014sisc} and
Xiao-Gimbutas nodes~\cite{xiao2010cma}, with the latter having associated
quadrature close to the Gaussian quadrature on a triangle. 
The quaternion coefficients $c^{(l,m)}$ in \cref{dlpdensappr}
are then obtained by solving the following
linear system, which imposes equality at these points in \cref{dlpdensappr}):
\be\label{dlpdensitylinearsystem}
\sum_{l=1}^p\sum_{m=1}^l
(0, \nabla H^{(l,m)}(\br^{(i,j)})) c^{(l,m)} = (\mu(\br^{(i,j)}),\bzero), \quad 0\le i+j \le p-1.
\ee

Writing out explicitly, \cref{dlpdensitylinearsystem}
is equivalent to the following equations with only the usual vector calculus involved:
\be\label{dlpdensitylinearsystem1}
\ba
-\sum_{l,m} \nhlm(\br^{(i,j)})\cdot \bclm &= \mu(\br^{(i,j)}), \\
\sum_{l,m} \clm_0\nhlm(\br^{(i,j)})+\nhlm(\br^{(i,j)})\times \bclm &= \bzero.
\ea
\ee
Let $\nabla H = (F_1, F_2, F_3)$. Then in block matrix form, we have
\be\label{dlpdensitylinearsystem2}
\begin{pmatrix}
  0 & -F_1 & -F_2 & -F_3\\
  F_1 & 0 & -F_3 & F_2\\
  F_2 & F_3 & 0 & -F_1\\
  F_3 & -F_2 & F_1 & 0
\end{pmatrix}
\begin{pmatrix} c_0\\c_1\\c_2\\c_3
\end{pmatrix}
= \begin{pmatrix} \mu\\0\\0\\0
\end{pmatrix},
\ee
where the length of $c_k$ ($k=0,1,2,3$)
is $\np$ with $\np=\frac{p(p+1)}{2}$,
the size of each block is
$\np  \times \np$, 
and the size of the full linear system
is $4\np \times 4\np$.

\begin{remark}
  \Cref{thm:trianglebasis} shows that when the patch
  is flat and lie in the $xy$--plane, the harmonic basis functions constructed in \cref{hlmdef}
  lead to $(F_1,F_2,F_3)=(0,0,F_3)$, and that $F_3$ is invertible for a set of collocation points
  $\{\br^{(i,j)} | 0\le i+j \le p-1\}$ if and only if the standard $p$th order monomial basis
  $\{x^iy^j | 0\le i+j\le p-1\}$ form linearly independent column vectors on those collocation
  points. Furthermore, since $(F_1,F_2,F_3)=(0,0,F_3)$, \cref{dlpdensitylinearsystem2} shows that
  the system matrix in the quaternion harmonic approximation is invertible if and only if $F_3$
  is invertible. This is due to the fact that in this case, the nonzero blocks in
  \cref{dlpdensitylinearsystem2} are $\pm F_3$ on the anti-diagonal.
\end{remark}

\begin{remark}
  For flat triangles in the $xy$-plane, $\nhlm \cdot \bnu = \Flm(x,y)$ ($1\le m \le l \le p$)
  span the same subspace
  as the  standard $p$th order monomial basis
  $\{x^iy^j | 0\le i+j\le p-1\}$. This is used in the single layer potential
  density approximation~\cref{slpdensappr}.
\end{remark}

For an arbitrary nonflat triangular patch in three dimensions
it is rather difficult to construct a set of complete and linearly independent
harmonic polynomial basis so that the resulting linear system \cref{dlpdensitylinearsystem}
is always solvable. However, all surface patches 
become closer and closer to being flat
when the patch size approaches zero. Thus, the harmonic basis functions that work for flat
triangles seem to be a good choice even for a general curvilinear triangular patch.
In practice, we observe that such basis functions 
lead to an invertible linear system in \cref{dlpdensitylinearsystem} 
with the desired convergence order $p$, even when the patch is relatively large and nonflat.

\subsection{New treatment of the Laplace single layer potential}
In this section, we present
a more natural reduction of the Laplace single layer potential (SLP)
to 1-forms, which requires
a scalar harmonic density approximation and a single DLP density approximation.
Since the cost of quaternion approximation and subsequent calculations is much more than
that of the scalar part, the cost of evaluating the SLP by our scheme
is close to that of the DLP. For comparison, the scheme in \cite{zhu2021thesis,zhu2022sisc}
requires three quaternion density approximations for the SLP evaluation.

\begin{lemma}\label{lemma2}
  Suppose that $u$ and $v$ are solutions to the Laplace equation.
  Suppose further that $P$ is a contractible patch on a smooth oriented surface $S$
  in three dimensions.
  Then the 2-form
  \be
  (u\nabla v-v\nabla u)\cdot \bnu da(\br)
  \ee
  is closed, and thus exact on $P$,
  where $da(\br)$ is the area differential. 
\end{lemma}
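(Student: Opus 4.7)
The plan is to reduce this to a direct application of \Cref{lemma1}. That lemma already says that a 2-form of the shape $\f(\br)\cdot\bnu(\br)\,da(\br)$ is closed precisely when $\nabla\cdot\f=0$, so the entire content of \Cref{lemma2} is the vector-calculus identity $\nabla\cdot(u\nabla v - v\nabla u)=0$ when $u$ and $v$ are both harmonic.

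First I would identify the vector field as $\f := u\nabla v - v\nabla u$, defined in a neighborhood of the patch $P$ in $\mathbb{R}^3$ (the excerpt's standing assumption is that $u,v$ solve Laplace's equation in such a neighborhood, so $\f$ is smooth there). Then I would expand the divergence using the product rule:
\begin{equation}
\nabla\cdot(u\nabla v) = \nabla u\cdot\nabla v + u\,\Delta v, \qquad \nabla\cdot(v\nabla u) = \nabla v\cdot\nabla u + v\,\Delta u.
\end{equation}
Subtracting, the cross terms $\nabla u\cdot\nabla v$ cancel, leaving $\nabla\cdot\f = u\,\Delta v - v\,\Delta u$, which vanishes because both $u$ and $v$ are harmonic. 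This is of course the classical Green's second identity in divergence form.

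With $\nabla\cdot\f = 0$ established, the first half of \Cref{lemma1} immediately yields that the 2-form $(u\nabla v - v\nabla u)\cdot\bnu\,da(\br)$ is closed on $P$. To obtain exactness, I would invoke the Poincar\'e lemma, which applies here by the standing assumption (stated right before \Cref{lemma1}) that each patch $P$ is contractible. Concretely, \Cref{lemma1} even gives the explicit primitive 1-form via formula \cref{1formconstruction}, so there is nothing further to verify.

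There is no real obstacle in this proof; it is essentially a one-line computation plus a reference. The only subtlety worth a sentence is to note that $u$ and $v$ only need to be harmonic in an open neighborhood of $P$ in $\mathbb{R}^3$ (not merely on the surface), so that the gradients and the ambient divergence make sense --- which is the framework already adopted in \Cref{lemma1}.
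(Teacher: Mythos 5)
Your proposal is correct and follows the paper's own argument: the paper proves the lemma by the same one-line identity $\nabla\cdot(u\nabla v - v\nabla u) = u\Delta v - v\Delta u = 0$, with closedness then following from \Cref{lemma1} and exactness from the standing contractibility assumption via the Poincar\'e lemma. Your write-up simply makes the product-rule expansion and the ambient-neighborhood hypothesis explicit, which matches the intended reading.
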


\begin{proof}
This simply follows from
  \be
  \nabla \cdot (u\nabla v-v\nabla u) = u\Delta v-v\Delta u = 0.
  \ee
\end{proof}

The following corollary is immediate.
\begin{corollary}\label{cor2}
  Suppose that $v$ is a harmonic function, i.e., $\Delta v=0$.
  Then the 2-form
  \be\label{slpquaternionform}
  \chi(\brp,\br) = (G(\brp,\br)\nabla H(\br)-H(\br)\nabla G(\brp,\br))\cdot \bnu(\br) da(\br)
  \ee
  is closed, i.e., $d\chi(\brp,\br) = 0$ for $\br\ne \brp$.
\end{corollary}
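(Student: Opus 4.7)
The statement is an immediate specialization of \cref{lemma2}, so the plan is essentially to name the right substitution and verify the hypotheses. First I would identify $u$ with $G(\brp,\cdot)$ and $v$ with $H$ (the harmonic function called $v$ in the hypothesis, which appears as $H$ inside the 2-form). The function $H$ is harmonic by assumption. For $u$, I would invoke the standard fact that the Laplace Green's function $G(\brp,\br)=\frac{1}{4\pi|\brp-\br|}$, viewed as a function of $\br$, satisfies $\Delta_{\br}G(\brp,\br)=0$ on $\mathbb{R}^3\setminus\{\brp\}$. Since $\brp$ is fixed and the claim is restricted to $\br\neq\brp$, both functions are harmonic on the relevant domain and \cref{lemma2} applies, giving $d\chi(\brp,\br)=0$.

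\textbf{Alternative route and main obstacle.} For a more self-contained argument I would recompute directly using \cref{lemma1}: since $\chi$ has the form $\f\cdot\bnu\,da$ with $\f=G\nabla H-H\nabla G$, \cref{lemma1} gives $d\chi=(\nabla\cdot\f)\,dx\wedge dy\wedge dz$. Expanding the divergence,
\begin{equation*}
\nabla\cdot(G\nabla H-H\nabla G)=\nabla G\cdot\nabla H+G\Delta H-\nabla H\cdot\nabla G-H\Delta G=G\Delta H-H\Delta G,
\end{equation*}
which vanishes away from $\brp$ by harmonicity of $H$ and of $G(\brp,\cdot)$. There is no real obstacle here; the only point worth flagging is the excision of the singularity at $\br=\brp$, which is built into the hypothesis, so no limiting argument or distributional calculation is needed. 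The corollary should therefore follow in two or three lines.
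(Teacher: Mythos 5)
Your proposal is correct and matches the paper's approach: the paper presents this corollary as an immediate consequence of \cref{lemma2}, exactly via the substitution $u=G(\brp,\cdot)$ (harmonic on $\mathbb{R}^3\setminus\{\brp\}$) and $v=H$ that you spell out. Your direct divergence computation is just an unwinding of the proof of \cref{lemma2}, so it adds detail but no genuinely different route.
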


Substituting $v=\hlm$ with $\hlm$ defined in \cref{hlmdef} 
into \cref{slpquaternionform},
we obtain a closed and exact 2-form $\slptform$ defined by the formula
\be\label{slp2form}
\slptform(\brp,\br) = (G(\brp,\br)\nabla \hlm(\br) -\hlm(\br)\nabla G(\brp,\br))\cdot \bnu(\br)
da(\br).
\ee
We denote the 1-forms whose exterior derivative are $\slptform$ by $\slpoform$, that is,
\be\label{slp1form}
\slptform(\brp,\br) = d\slpoform(\brp,\br),
\ee
and by the Stokes theorem,
\be\label{slpformreduction}
\int_P \slptform(\brp,\br) = \int_{\partial P} \slpoform(\brp,\br).
\ee

\begin{theorem}\label{thm:slpreduction}
  Suppose that $P$ is a contractable patch in $\mathbb{R}^3$ and
  $\br^{(i,j)}$ are a set of $\np$ collocation points on $P$.
  Suppose further that
  the density in the Laplace single layer potential \cref{slpdef} is approximated
by
\be\label{slpdensappr}
\sigma(\br) \approx \sum_{l=1}^p\sum_{m=1}^l \nabla\hlm(\br)\cdot\bnu(\br) \dlm,
\ee
where $\dlm$ are {\it scalar} coefficients determined by the 
following 
$\np \times \np$
linear system:
\be\label{slpdensitylinearsystem}
\sum_{l=1}^p\sum_{m=1}^l
\nabla H^{(l,m)}(\br^{(i,j)})\cdot \bnu(\br^{(i,j)}) \dlm = \sigma(\br^{i,j}).
\ee
Let $\rho(\br)$ be an intermediate DLP density defined by the formula
\be\label{dlpdens}
\rho(\br)=\sum_{l,m}\hlm(\br)\dlm,
\ee
and approximate $\rho(\br)$ using quaternion basis $(0,\nhlm)$
(cf. \cref{dlpdensappr}):
\be\label{dlpdensappr2}
(\rho(\br),\bzero) \approx \sum_{l,m}(0,\nhlm(\br))\glm.
\ee
Then the Laplace single layer potential on $P$ can be reduced to the evaluation
of 1-forms as follows:
\be\label{slp1formrep}
\ba
\slp[\sigma](\brp)&=\int_P G(\brp,\br)\sigma(\br)da(\br)\\
&=\sum_{l,m}\int_{\partial P} \left(\slpoform(\brp,\br) \dlm - \left[\dlpoform(\brp,\br)\glm\right]_0\right),
\ea
\ee
where $\slpoform$ is defined in \cref{slp1form} and $\dlpoform$ is defined
in \cref{dlp1form}.
\end{theorem}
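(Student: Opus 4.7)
The plan is to substitute the scalar harmonic density approximation \cref{slpdensappr} into the SLP integral and then exploit the observation that the 2-form $\slptform$ defined in \cref{slp2form} differs from $G\,\nhlm\cdot\bnu\,da$ by exactly the integrand of a Laplace DLP with density $\hlm$. This decomposition splits the SLP into a closed-form piece that yields to Stokes' theorem and a DLP piece whose density is harmonic, both of which are reducible to boundary line integrals using machinery already developed in the paper.

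First I would substitute \cref{slpdensappr} into the SLP definition \cref{slpdef}, pulling the scalar coefficients $\dlm$ out of the integrals. Using the definition of $\slptform$ to rewrite $G\,\nhlm\cdot\bnu\,da = \slptform(\brp,\br) + \hlm(\br)\,(\nabla G(\brp,\br)\cdot\bnu(\br))\,da$, the sum over $l,m$ separates $\slp[\sigma](\brp)$ into two contributions: the sum of closed 2-form integrals $\sum_{l,m}\dlm\int_P\slptform(\brp,\br)$ and the single double layer potential $\dlp[\rho](\brp)$, where $\rho$ is the intermediate density \cref{dlpdens}; linearity of the Laplacian together with harmonicity of each $\hlm$ makes $\rho$ harmonic, which is what licenses reusing the DLP framework.

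Next I would apply the general Stokes theorem to the first contribution via \Cref{cor2} and \cref{slp1form}, converting each $\int_P\slptform$ into the line integral $\int_{\partial P}\slpoform$. For the second contribution, since $\rho$ is harmonic it admits the quaternion harmonic approximation \cref{dlpdensappr2}, and \cref{lem:dlpreduction} applied with coefficients $\glm$ reduces $\dlp[\rho](\brp)$ to $-\sum_{l,m}\int_{\partial P}[\dlpoform(\brp,\br)\,\glm]_0$. Combining both reductions into a single sum over $l,m$ produces the identity \cref{slp1formrep}.

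The analytic work is short, so there is no single heavy obstacle; the substantive step is conceptual rather than technical: recognizing that the leftover term from the product rule on $\slptform$ is not a new object, but precisely a DLP of the harmonic density $\rho$, so that the pre-existing reduction \cref{lem:dlpreduction} applies verbatim and no further 1-form construction is needed. The only delicate bookkeeping is the sign convention $\dlp[\mu]=-Q_0[\mu]$ recorded in \cref{sec:dlpsummary}, which must be tracked through both applications of integration by parts to produce the exact signs displayed in \cref{slp1formrep}.
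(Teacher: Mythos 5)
Your proposal is correct and follows essentially the same route as the paper: substitute \cref{slpdensappr}, split $G\,\nhlm\cdot\bnu\,da$ into $\slptform$ plus the DLP differential of the harmonic density $\rho$, reduce the first piece by Stokes via \cref{slp1form}--\cref{slpformreduction} and the second via the quaternion approximation \cref{dlpdensappr2} together with \cref{lem:dlpreduction}, including the sign $\dlp=-Q_0$. No gaps to report.
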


\begin{proof}
Combining \cref{slp2form} and \cref{slpdensappr}, we obtain
\be\label{slpappr}
\ba
G(\brp,\br)\sigma(\br)da(\br)
&\approx \sum_{l,m} G(\brp,\br)\nhlm(\br)\cdot \bnu(\br) \dlm da(\br)\\
&=\sum_{l,m}\slptform(\brp,\br) \dlm +
\nabla G(\brp,\br)\cdot\bnu(\br)
\rho(\br)
da(\br).
\ea
\ee
Note that the second term on the right hand side of \cref{slpappr}
is the Laplace double layer potential differential. 
Thus, using the quaternion approximation \cref{dlpdensappr2}
and \cref{lem:dlpreduction}, we have
\be\label{slpappr2}
(0, \nabla G(\brp,\br))(0,\bnu(\br)) \rho(\br)
da(\br) \approx \sum_{l,m}\dlptform(\brp,\br) \glm,
\ee
with exact quaternion 2-forms $\dlptform$ defined in \cref{dlp2form}.

Combining \cref{slpappr} -- \cref{slpappr2}, we obtain
\be\label{slp2formrep}
G(\brp,\br)
\sigma(\br)da(\br)
\approx \sum_{l,m}\slptform(\brp,\br) \dlm
- \sum_{l,m} \left[\dlptform(\brp,\br)\glm\right]_0.
\ee
Finally, combining \cref{dlp1form}, \cref{slp1form}, \cref{slpformreduction},
and \cref{slp2formrep}, we obtain \cref{slp1formrep}.
\end{proof}

In other words, the evaluation of the single layer potential is carried out by
a scalar approximation to the original density in \cref{slpdensappr}, 
a quaternion approximation \cref{dlpdensappr2} to the intermediate DLP density in
\cref{dlpdens}, and then reduction to line integrals by the general Stokes theorem.

\begin{remark}
  \Cref{slp1formrep} is similar to the treatment of the Laplace
  single layer potential in two dimensions in \cite{helsing2008jcp2,helsing2008jcp1},
  where one evaluates 
  the integrals involving the logarithmic kernel via the formula
\be
\int_{a}^b \log(\tau-z_t) \tau^{n-1} d\tau = \frac{1}{n} \log(\tau-z_t) \tau^n\Big|_a^b
- \frac{1}{n}\int_{a}^b\frac{\tau^n}{\tau-z_t} d\tau.
\ee
\end{remark}

\subsection{Singularity preserving 1-forms}
The differential 1-forms $\dlpoform$ in \cref{dlp1formrep} and $\slpoform$ 
in \cref{slp1formrep} are not unique. Indeed, different choice of the reference
point $\bp$ in \cref{1formconstruction} leads to different 1-forms.
In \cite{zhu2021thesis,zhu2022sisc}, $\bp=\bzero$ and the singularity in the resulting 1-forms
for the double layer potential is fairly complicated (see Appendix B of \cite{zhu2022sisc}
for explicit expressions). These nearly singular line integrals often require
expensive adaptive integration when the target point $\brp$ is close to the boundary
curves of $P$. We propose to use $\brp$ as the reference point when constructing
$\dlpoform$ and $\slpoform$. The advantage
of using $\bp=\brp$ in \cref{1formconstruction} is that the resulting 1-forms
have the same singularity as the Green's function and its derivatives, i.e., $1/|\brp-\br|^n$
for some positive integer $n$. 
The singularity-swap quadrature~\cite{afklinteberg2021bit} can then be used
to evaluate these
line integrals via analytical recurrence after polynomial approximation of the density
and a root-finding step. In other words, 1-forms are reduced to 0-forms and
the need of adaptive integration is removed.

We now derive these target-centered 1-forms. The translation of harmonic
polynomials becomes straightforward when complex solid harmonic polynomials are used.
Thus, we introduce
\be\label{slmdef}
\slm(\br)= \slm(x,y,z) = \rlm(y,z,x).
\ee
Combining \cref{htlmdef} and \cref{hlmdef}, we have
\be\label{hlmslmconnection}
\hlm(\br) = \sqrt{2} \Im(\slm(\br)).
\ee
The translation of $\slm$ is the same as that of $\rlm$ in \cref{l2ltranslation}, which leads to
\be\label{slmtranslation0}
\ba
\slm(\brp+t(\br-\brp)) &=\sum_{j=0}^l \sum_{k=-j}^j
S^{(j,k)}(t\br)S^{(l-j,m-k)}((1-t)\brp)T_{jk,lm}\\
&=\sum_{j=0}^l t^j(1-t)^{l-j} \sum_{k=-j}^j
S^{(j,k)}(\br)S^{(l-j,m-k)}(\brp)T_{jk,lm}.
\ea
\ee
Instead of real differential 2-forms $\dlptform$ in \cref{dlp2form} for the DLP,
we consider complex differential 2-forms
\be\label{cdlp2form}
\cdlptform(\brp,\br) = (0, \nabla G(\brp,\br))(0,\bnu(\br)) (0, \nslm(\br))da(\br).
\ee

\begin{theorem}[DLP 1-forms]\label{thm:dlp1form}
  Let $\cdlptform$ be the complex differential 2-forms defined
  in \cref{cdlp2form}. Let $P$ be a triangular patch on $S$.
  Suppose that the complex differential 1-forms
  $\cdlpoform$ are defined by
  \be\label{cdlp1formrep}
\cdlpoform(\brp,\br)=\frac{1}{4\pi} 
\sum_{j=2}^l C_{j,l} \sum_{k=-j}^j
\qjk(\brp,\br)
S^{(l-j,m-k)}(\brp)T_{jk,lm},
\ee
where 
\be
C_{j,l}=\int_0^1 t^{j-2}(1-t)^{l-j}dt = \frac{(j-2)!(l-j)!}{(l-1)!},
\ee
and the quaternion 1-forms $\qjk$ are given by the formula
\be\label{qjkdef}
\qjk(\brp,\br)=
-\left(0,d\left(\frac{\brp-\br}{|\brp-\br|}\right)\right)
\left(0,\nabla S^{(j,k)}(\br)\right).
\ee
Suppose further that
\be\label{gammalmdef}
\gammalm(\brp,\br) = \frac{1}{4\pi} \omega(\brp,\br) (0, \nabla \slm(\brp)),
\ee
where $\omega=(\omega_0,\bomega)$ is a quaternion 1-form defined by
\be\label{omega0def}
\omega_0(\brp,\br) = \frac{((\brp-\br)\times\bu)\cdot d\br}{|\brp-\br|(|\brp-\br|+\bu\cdot(\brp-\br))},
\ee
\be\label{bomegadef}
\bomega(\brp,\br) = -\frac{d\br}{|\brp-\br|},
\ee
with $\bu$ a fixed unit vector satisfying the property that
$\bu\cdot\bnu(\br)\le 0$ for any $\br\in P$.
Then
\be\label{cdlp1form}
d\left(\cdlpoform(\brp,\br)+\gammalm(\brp,\br)\right) = \cdlptform(\brp,\br).
\ee
\end{theorem}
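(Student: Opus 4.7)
My strategy is to apply the Poincar\'e homotopy construction from \cref{lemma1}, extended component-wise to the quaternion-valued 2-form $\cdlptform$, with base point $\bp=\brp$, and then to isolate the $t$-integrable portion of the resulting series expansion from a residual piece that needs a separately constructed primitive.

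The first step is to verify that $\cdlptform$ is closed on $\mathbb{R}^3\setminus\{\brp\}$. Each of its four real components is a scalar 2-form of the shape $\f\cdot\bnu\,da$ covered by \cref{lemma1}; the required divergence-free conditions reduce to $\Delta G(\brp,\cdot)=0$ off the diagonal, the harmonicity of $\slm$, and the vector identity $\nabla\cdot(\nabla G\times\nslm)=\nslm\cdot(\nabla\times\nabla G)-\nabla G\cdot(\nabla\times\nslm)=0$. The construction \cref{1formconstruction} with $\bp=\brp$ then produces a primitive 1-form whose integrand contains $\nabla G(\brp,\brp+t(\br-\brp))=-\frac{1}{4\pi t^{2}}\frac{\br-\brp}{|\br-\brp|^{3}}$ and evaluates $\nslm$ at the same interpolation point.

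The second step is to exploit the bilinear addition theorem \cref{slmtranslation0}. Differentiating in $\br$ gives
\be
\nslm(\brp+t(\br-\brp))=\sum_{j=0}^{l}\sum_{k=-j}^{j}t^{j-1}(1-t)^{l-j}\,\nabla S^{(j,k)}(\br)\,S^{(l-j,m-k)}(\brp)\,T_{jk,lm}.
\ee
Inserting this into the homotopy integrand and collecting powers of $t$, each $(j,k)$ term contributes a $t$-integral $\int_0^1 t^{j-2}(1-t)^{l-j}\,dt=C_{j,l}=(j-2)!(l-j)!/(l-1)!$ whenever $j\ge 2$. A direct quaternion-algebraic reorganization, grouping the factor $(0,\nabla G)(0,\bnu(\br))\,da(\br)$ with the cross product arising from the $(\br-\brp)\times(\,\cdot\,)$ in the homotopy formula, identifies the remaining $\br$-dependent factor with $\qjk$ of \cref{qjkdef}, and the $j\ge 2$ terms assemble exactly into $\cdlpoform$ of \cref{cdlp1formrep}. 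The $j=0$ contribution vanishes because $\nabla S^{(0,0)}=0$.

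The main obstacle is the $j=1$ contribution, for which $\int_0^1 t^{-1}(1-t)^{l-1}\,dt$ diverges, reflecting that the chosen homotopy line from $\brp$ to $\br$ passes through the singularity of $G$. To absorb it I would treat it as a separate sub-problem: after summing over $k\in\{-1,0,1\}$ via the gradient-addition identity for solid harmonics, the $j=1$ portion of $\cdlptform$ reduces to a universal closed 2-form on $\mathbb{R}^3\setminus\{\brp\}$ right-multiplied by the constant quaternion $(0,\nslm(\brp))$. This universal 2-form splits into (i) the solid-angle form $\frac{(\br-\brp)\cdot\bnu(\br)}{4\pi|\br-\brp|^{3}}da(\br)$ centered at $\brp$ and (ii) the exact 2-form $-d(d\br/|\brp-\br|)$. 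A classical primitive of (i) with Dirac string emanating from $\brp$ in the direction $\bu$ is precisely $\omega_0$ of \cref{omega0def}, and the hypothesis $\bu\cdot\bnu(\br)\le 0$ is exactly what ensures $\omega_0$ remains smooth across $P$; a primitive of (ii) is $\bomega$ of \cref{bomegadef}. Packaging these as $\omega=(\omega_0,\bomega)$ and right-multiplying by $(0,\nslm(\brp))$ yields $\gammalm$ of \cref{gammalmdef}, with $d\gammalm$ equal to the missing $j=1$ piece. Summing the $j\ge 2$ and $j=1$ contributions gives $d(\cdlpoform+\gammalm)=\cdlptform$.
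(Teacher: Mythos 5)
Your outline follows essentially the same route as the paper (the homotopy construction \cref{1formconstruction} with $\bp=\brp$, the addition theorem \cref{slmtranslation0}, the $j\ge 2$ terms assembling into $\cdlpoform$ with the stated $C_{j,l}$ and $\qjk$, and the solid-angle primitives $\omega_0$, $\bomega$ for the constant part), but there is a genuine gap in your handling of the $j=1$ term, which is precisely the crux of the theorem. The $j$-decomposition lives at the level of the homotopy integrand, i.e., of $\nslm(\brp+t(\br-\brp))$, not of the 2-form $\cdlptform$ itself; since the $j=1$ piece makes the $t$-integral diverge, you cannot manipulate the would-be full primitive term by term and then swap the $j=1$ piece for a different primitive. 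Concretely, the $j=1$ term of the expansion is $(1-t)^{l-1}\,\nslm(\brp)$ (by the very gradient-addition identity you invoke), whereas the constant-density 2-form $(0,\nabla G)(0,\bnu)(0,\nslm(\brp))\,da$, for which you build the primitive $\gammalm$, corresponds to the weight $1$ rather than $(1-t)^{l-1}$. Your accounting therefore leaves unexamined a discrepancy carrying the finite factor $\int_0^1 t^{-1}\left((1-t)^{l-1}-1\right)dt$ (nonzero for $l\ge 2$) multiplying the 1-form $\left(0,d\left(\tfrac{\brp-\br}{|\brp-\br|}\right)\right)\left(0,\nslm(\brp)\right)$, which is not identically zero; as written, your argument establishes \cref{cdlp1form} only up to this unaddressed term.

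The fix is exactly what the paper does, and your proof needs both ingredients. First, subtract the constant \emph{before} invoking the homotopy: split $\cdlptform$ using $\ntslm(\br)=\nslm(\br)-\nslm(\brp)$, so the shifted density vanishes at the base point, the $t$-integral converges, and \cref{lemma1} legitimately applies; the expansion (the paper's \cref{slmtranslation1}) then yields the $j\ge 2$ terms plus a correction with factor $(1-t)^{l-1}-1$. Second, show this correction 1-form is exact, hence contributes nothing to the exterior derivative (nor to the integral over the closed curve $\partial P$): since $\nabla S^{(1,k)}$ is a constant vector, the quaternion 1-form $\left(0,d\left(\tfrac{\brp-\br}{|\brp-\br|}\right)\right)\left(0,\nabla S^{(1,k)}\right)$ has exact scalar and vector components, which is the content of the integration-by-parts identity \cref{qjkdef2} combined with $d(\nabla S^{(1,k)}(\br))=0$. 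With these two steps added, your outline becomes the paper's proof; without them, replacing the divergent $j=1$ contribution by $\gammalm$ is unjustified.
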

\begin{proof}
Straightforward calculation shows that
\be
\ba
G(\brp,\brp+t(\br-\brp)) &= \frac{1}{4\pi t|\brp-\br|},\\
\nabla G(\brp,\brp+t(\br-\brp)) &= \frac{\brp-\br}{4\pi t^2|\brp-\br|^3},
\ea\label{gfexpansion}
\ee
and
\be\label{cdlp2form1}
\ba
\cdlptform(\brp,\br)
&= \left(\left(\nabla G \times \nabla \slm\right)\cdot \bnu,
-(\nabla G\cdot \bnu) \nabla \slm\right.\\
&+\left.(\nabla G \times \bnu)\times \nabla \slm\right)da(\br),
\ea
\ee
Note that $1/t^2$ appears in $\nabla G(\brp,\brp+t(\br-\brp))$ because $G$ is singular
when $\br=\brp$. Thus, in order to apply \cref{1formconstruction} with $\bp=\brp$,
the density must be equal to zero at $t=0$. For this,
we split $\cdlptform$ into two terms:
\be\label{cdlp2form2}
\ba
\cdlptform(\brp,\br)
&=(0, \nabla G(\brp,\br))(0,\bnu(\br)) (0, \ntslm(\br))da(\br)\\
&+(0, \nabla G(\brp,\br))(0,\bnu(\br)) (0, \nslm(\brp))da(\br),
\ea
\ee
where
\be
\ntslm(\br)) = \nslm(\br)-\nslm(\brp),
\ee
that is, we subtract the constant term so that $\ntslm(\brp)=\bzero$.
Now, \cref{slmtranslation0} leads to 
\be\label{slmtranslation1}
\ba
\ntslm(\brp+t(\br-\brp)) 
&=\sum_{j=2}^l t^{j-1}(1-t)^{l-j} \sum_{k=-j}^j
\nabla S^{(j,k)}(\br)S^{(l-j,m-k)}(\brp)T_{jk,lm}\\
&+((1-t)^{l-1}-1) \sum_{k=-1}^1
\nabla S^{(1,k)}S^{(l-1,m-k)}(\brp)T_{1k,lm},\\
\ea
\ee
where we have pulled out the $j=1$ terms from the first summation
on the right hand side of \cref{slmtranslation1}.
We first calculate the contribution from the first summation on the right hand side of
\cref{slmtranslation1}. Combining \cref{1formconstruction}, \cref{gfexpansion}, \cref{cdlp2form},
and \cref{slmtranslation1}, we obtain the expression for $\cdlpoform$ given by
\cref{cdlp1formrep}--\cref{qjkdef}. 
Applying integration by parts along $\partial P$, we obtain an alternative
expression for $\qjk$ in \cref{qjkdef}:
\be\label{qjkdef2}
\qjk(\brp,\br)
=\left(0,\frac{\brp-\br}{|\brp-\br|}\right)
\left(0,d(\nabla S^{(j,k)}(\br))\right).
\ee
And it is easy to see that the contribution from the $j=1$ terms on the right hand
side of \cref{slmtranslation1} is exactly zero due to the fact that $d(\nabla S^{1,k}(\br))=0$.
Thus, we have
\be\label{cdlp1form1}
d\cdlpoform(\brp,\br)=(0, \nabla G(\brp,\br))(0,\bnu(\br)) (0, \ntslm(\br))da(\br).
\ee
\paragraph{DLP 1-forms for constant density}
We now derive expressions for $\gammalm$ in \cref{cdlp1form}. It is clear that $\nslm(\brp)$
is a constant vector with respect to the integration variable $\br$. And
it is easy to verify that
\be
(0, \nabla G(\brp,\br))(0,\bnu(\br))da(\br)
= (-\nabla G(\brp,\br)\cdot \bnu(\br), \nabla G(\brp,\br)\times \bnu(\br))da(\br)
\ee
is closed and exact. Thus, we have \cref{gammalmdef}. Direct calculation shows that
\be
\frac{1}{4\pi} d\bomega = \nabla G(\brp,\br)\times \bnu(\br)da(\br).
\ee

Now, it is well-known that $-\int_P \nabla\frac{1}{|\brp-\br|}\cdot \bnu(\br)da(\br)$
is the solid angle of the patch $P$ with respect to the target point $\brp$ (see, for
example, \cite{asvestas1985josa1,asvestas1985josa2,masket1957rsi}). And \cref{omega0def}
can be found in \cite{asvestas1985josa1} with
\be
\frac{1}{4\pi} d\omega_0 = -\nabla G(\brp,\br)\cdot \bnu(\br)da(\br).
\ee
\end{proof}

\begin{remark}
  By \cref{hlmslmconnection}, it follows that
the real 1-forms $\dlpoform$ in \cref{dlp1form} are given by 
\be\label{dlp1formrep2}
\dlpoform(\brp,\br) = \sqrt{2} \Im\left(\cdlpoform(\brp,\br)+\gammalm(\brp,\br)\right).
\ee
\end{remark}

Similarly, we consider the following complex 2-forms for the SLP rather than
the real 2-forms $\slptform$ in \cref{slp2form}:
\be\label{cslp2form}
\cslptform(\brp,\br) 
=(G(\brp,\br)\nslm(\br) - \nabla G(\brp,\br) \slm(\br))\cdot \bnu da(\br).
\ee
\begin{theorem}[SLP 1-forms]\label{thm:cslp1form}
  Let $\cslptform$ be complex differential 2-forms defined by \cref{cslp2form}.
  Suppose that the complex differential 1-forms
  $\cslpoform$ are defined by
\be\label{cslp1formrep}
\cslpoform(\brp,\br)=\frac{1}{4\pi} 
\sum_{j=1}^l D_{j,l} \sum_{k=-j}^j
\vjk(\brp,\br)
S^{(l-j,m-k)}(\brp)T_{jk,lm},
\ee
where
\be
D_{j,l}=\int_0^1 t^{j-1}(1-t)^{l-j}dt=\frac{(j-1)!(l-j)!}{l!},
\ee
and
\be\label{vjkdef}
\vjk(\brp,\br)=
\frac{1}{|\brp-\br|}
\left[(\brp-\br)\times\nabla S^{(j,k)}(\br)\right]\cdot d\br.
\ee  
Then
\be\label{cslp1form}
d\left(\cslpoform(\brp,\br)+\slm(\brp)\omega_0(\brp,\br)\right) = \cslptform(\brp,\br),
\ee
where $\omega_0$ is given by \cref{omega0def}.
\end{theorem}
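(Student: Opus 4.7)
The plan is to follow the same two-step structure as the proof of \cref{thm:dlp1form}: isolate a ``constant-density'' piece that pairs with the solid-angle 1-form $\omega_0$, then apply \cref{lemma1} to the remainder. Writing $\slm(\br) = \slm(\brp) + (\slm(\br)-\slm(\brp))$ splits the integrand as
\[
\cslptform = \bigl(G\nslm - (\slm-\slm(\brp))\nabla G\bigr)\cdot\bnu\,da - \slm(\brp)\nabla G\cdot\bnu\,da.
\]
The last, constant-density term is the classical solid-angle differential scaled by the constant $\slm(\brp)$, and by the identity $\tfrac{1}{4\pi}d\omega_0 = -\nabla G\cdot\bnu\,da$ already established in the proof of \cref{thm:dlp1form}, it equals $d$ of $\slm(\brp)\omega_0$ up to the $\tfrac{1}{4\pi}$ normalization consistent with that absorbed into $\cslpoform$.

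For the main piece I would first verify that $G\nslm - (\slm-\slm(\brp))\nabla G$ is divergence-free: the $\nabla G\cdot\nslm$ cross terms cancel, and $\Delta\slm = \Delta G = 0$ for $\br\ne\brp$. Hence the 2-form is closed and \cref{lemma1} applies with $\bp = \brp$. The decisive simplification unique to the SLP is that $\nabla G(\brp,\brp+t(\br-\brp))$ is parallel to $\br-\brp$, so
\[
(\br-\brp)\times\nabla G(\brp,\brp+t(\br-\brp)) = \bzero.
\]
Consequently the entire $(\slm-\slm(\brp))\nabla G$ contribution to the 1-form formula \cref{1formconstruction} drops out, and one is left with $\beta = -\int_0^1 t(\br-\brp)\times G(\brp,\brp+t(\br-\brp))\nslm(\brp+t(\br-\brp))\,dt\cdot d\br$, the prefactor $t$ absorbing the $1/t$ singularity of $G$ at $t=0$ so that the integrand is bounded throughout $[0,1]$.

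Finally I would substitute $G = \tfrac{1}{4\pi t|\brp-\br|}$ together with the $\br$-gradient of \cref{slmtranslation0}, namely $\nslm(\brp+t(\br-\brp)) = \sum_{j=1}^{l}t^{j-1}(1-t)^{l-j}\sum_{k}\nabla S^{(j,k)}(\br)\,S^{(l-j,m-k)}(\brp)\,T_{jk,lm}$. The $t$-integrals then collapse to Beta functions $\int_0^1 t^{j-1}(1-t)^{l-j}dt = D_{j,l}$, and rewriting $(\br-\brp) = -(\brp-\br)$ exposes the 1-form $\vjk$ of \cref{vjkdef} inside each inner sum. Matching coefficients against \cref{cslp1formrep} identifies $\beta$ with $\cslpoform$, giving $d\cslpoform = (G\nslm - (\slm-\slm(\brp))\nabla G)\cdot\bnu\,da$, and adding the constant-density contribution yields \cref{cslp1form}. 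The main obstacle is the rigorous justification that the cross-product cancellation alone suffices to invoke \cref{lemma1} despite the $1/|\brp-\br|$ singularity of $(\slm-\slm(\brp))\nabla G$ at $\br=\brp$; this can be discharged by excising a small ball around $\brp$, applying the lemma on the complement, and passing to the limit using the bounded cross-product integrand. Everything thereafter is a routine bookkeeping match with \cref{vjkdef} and the Beta-function identity for $D_{j,l}$.
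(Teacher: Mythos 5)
Your proposal is correct and follows essentially the same route as the paper: the same splitting $\slm(\br)=\slm(\brp)+\stlm(\br)$ so that the homotopy formula \cref{1formconstruction} with $\bp=\brp$ applies, the same translation expansion \cref{slmtranslation0} producing the Beta-function coefficients $D_{j,l}$, and the same identification of the constant-density piece with $\slm(\brp)\,d\omega_0$. The cross-product cancellation $(\br-\brp)\times\nabla G(\brp,\brp+t(\br-\brp))=\bzero$ that you make explicit (and your excise-a-ball justification of the homotopy formula at $t=0$) is implicit in the paper's derivation, so this is a matter of added detail rather than a different approach.
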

\begin{proof}
  In order to apply \cref{1formconstruction}, we split $\cslptform$ into two terms:
\be\label{cslp2form1}
\ba
\cslptform(\brp,\br) 
&=(G(\brp,\br)\nslm(\br) - \nabla G(\brp,\br) \stlm(\br))\cdot \bnu da(\br)\\
&- \slm(\brp)\nabla G(\brp,\br)\cdot \bnu da(\br),
\ea
\ee
where
\be
\stlm(\br) = \slm(\br)-\slm(\brp).
\ee
Consider first 1-forms for the first term on the right hand side of \cref{cslp2form1}.
It follows from \cref{slmtranslation0} that
\be\label{slmtranslation2}
\ba
\stlm(\brp+t(\br-\brp))
&=\sum_{j=1}^l t^j(1-t)^{l-j} \sum_{k=-j}^j
S^{(j,k)}(\br)S^{(l-j,m-k)}(\brp)T_{jk,lm}\\
&+((1-t)^l-1)
S^{(0,0)}S^{(l,m)}(\brp)T_{00,lm},\\
\nslm(\brp+t(\br-\brp)) 
&=\sum_{j=1}^l t^{j-1}(1-t)^{l-j} \sum_{k=-j}^j
\nabla S^{(j,k)}(\br)S^{(l-j,m-k)}(\brp)T_{jk,lm}.
\ea
\ee

Combining \cref{1formconstruction}, \cref{gfexpansion}, \cref{cslp2form1}, 
and \cref{slmtranslation2}, we obtain the expression for $\cslpoform$ given by
\cref{cslp1formrep}--\cref{vjkdef}.
For the second term on the right hand side of \cref{cslp2form1}, it is easy to see that
\be
-\slm(\brp)\nabla G(\brp,\br)\cdot \bnu da(\br) = \slm(\brp)d \omega_0,
\ee
with $\omega_0$ defined in \cref{omega0def}.
\end{proof}

\begin{remark}
  By \cref{hlmslmconnection}, it follows that
  the real 1-forms $\slpoform$ in \cref{slp1form} are given by
\be
\slpoform(\brp,\br) =
\sqrt{2} \Im\left(\cslpoform(\brp,\br) + \slm(\brp) \omega_0(\brp,\br)\right).
\ee
\end{remark}

\subsection{1-to-0 form conversion}\label{sec:1formeval}
We now present some details on the evaluation of line integrals
(i.e., integral of 1-forms)
via the singularity-swap quadrature in \cite{afklinteberg2021bit}. Pulling out
the target-dependent part from the 1-forms in \cref{cdlp1formrep}--\cref{bomegadef}
for the DLP and \cref{cslp1formrep}--\cref{vjkdef} for the SLP, we observe that
the following line integrals need to be calculated:
\be
\ba
\Qjk(\brp)&=\int_{\partial P} \qjk(\brp,\br)
=-\int_{\partial P} \left(0,d\left(\frac{\brp-\br}{|\brp-\br|}\right)\right)
\left(0,\nabla S^{(j,k)}(\br)\right),\\
\bOmega(\brp)&=\int_{\partial P} \bomega(\brp,\br)
=-\int_{\partial P} \frac{d\br}{|\brp-\br|},\\
\Vjk(\brp)&=\int_{\partial P} \vjk(\brp,\br)
=\int_{\partial P}\frac{1}{|\brp-\br|}
\left[(\brp-\br)\times\nabla S^{(j,k)}(\br)\right]\cdot d\br,
\ea\label{lineintegrals}
\ee
and
\be\label{solidangle}
\Omega_0=\int_{\partial P} \omega_0(\brp,\br)
=\int_{\partial P}\frac{((\brp-\br)\times\bu)\cdot d\br}{|\brp-\br|(|\brp-\br|+\bu\cdot(\brp-\br))}.
\ee
We will use $\Vjk$ to illustrate the steps in the evaluation of line integrals in \cref{lineintegrals}. For a smooth triangular patch $P$, its boundary $\partial P$ consists of three smooth curves. Denote one such boundary curve by $\Gamma$, and assume that the parameterization of $\Gamma$ induced by the parameterization of $P$ is $\br(t)$ with $t\in[-1,1]$. Then we only need to consider the evaluation of the following integral:
\be
I^{(j,k)}
(\brp) = \int_{-1}^1 \frac{1}{|\brp-\br(t)|}
\left[(\brp-\br(t))\times\nabla S^{(j,k)}(\br(t))\right]\cdot\frac{d\br(t)}{dt} dt.
\ee
As discussed in \cref{sec:ssq}, the above integral can be written as
\be
I^{(j,k)}(\brp) = \int_{-1}^1 \frac{F(t)}{((t-t_0)(t-\overline{t_0}))^{1/2}}dt,
\ee
where $t_0$ is a complex root satifying \cref{complexparameter} and
\be
F(t) = \left[(\brp-\br(t))\times\nabla S^{(j,k)}(\br(t))\right]\cdot\frac{d\br(t)}{dt}
\frac{|t-t_0|}{|\br(t)-\brp|}.
\ee
For a fixed target point $\brp$, we approximate $F(t)$ by its polynomial interpolant
with, say, the $\tp$ Gauss-Legendre nodes $t_i$ ($i=1,\ldots,\tp$)
as the interpolation nodes. That is,
\be
F(t)=\sum_{i=0}^{\tp-1}\tc_i t^i, \quad \tc = U F,
\ee
where
\be
\tc=\begin{bmatrix}\tc_0\\ \vdots \\ \tc_{\tp-1}\end{bmatrix},
\quad
F=\begin{bmatrix}F(t_1)\\ \vdots \\ F(t_\tp)\end{bmatrix},
\quad
U=\begin{bmatrix}1& t_1& \cdots & t_1^{\tp-1}\\
\vdots &\vdots&\vdots&\vdots\\
1& t_{\tp}& \cdots & t_{\tp}^{\tp-1}\end{bmatrix}^{-1}.
\ee
Here, $\tp$ should be slightly greater than $p$ in order to take the geometric
variation and the smooth factor $|t-t_0|/|\br(t)-\brp|$ into account.
In practice, we observe that $\tp=p+5$ or $\tp=2p$ works well.
The integrals
\be\label{ssq1dintegrals}
\mathcal{I}_i(\brp) = \int_{-1}^1 \frac{t^i}{((t-t_0)(t-\overline{t_0}))^{1/2}}dt
\ee
can be calculated by a recurrence relation (see \cite{afklinteberg2021bit} for details).
Thus,
\be\label{ssq1dintegrals2}
I^{(j,k)}(\brp) = \sum_{i=0}^{\tp-1}\tc_i \mathcal{I}_i
=\mathcal{I}(\brp)^TUF(\brp), \quad
\mathcal{I}(\brp)^T=\begin{bmatrix}
\mathcal{I}_0(\brp) & \ldots & \mathcal{I}_{\tp-1}(\brp)\end{bmatrix}.
\ee
Note that the matrix $U$ can be precomputed and stored since it is independent of $\brp$.

After quantities such as $I^{(j,k)}$ have been calculated, the line integrals
associated with the DLP and SLP can be calculated easily. For example, the integrals
of $\cdlpoform$ in \cref{cdlp1formrep} are computed by
\be\label{cdlp1dintegral}
I[\cdlpoform](\brp)=\int_{\partial P}\cdlpoform(\brp,\br)=\frac{1}{4\pi} 
\sum_{j=2}^l C_{j,l} \sum_{k=-j}^j
\Qjk(\brp)
S^{(l-j,m-k)}(\brp)T_{jk,lm}.
\ee

\begin{remark}
  Straightforward calculation shows that \cref{qjkdef} contains
  both $1/|\brp-\br|$ and $1/|\brp-\br|^3$ near-singularities. If we use
  \cref{qjkdef2} to calculate $\Qjk$, then we only need to deal with
  the $1/|\brp-\br|$ near-singularity.
\end{remark}

\begin{remark}
  The solid angle $\Omega_0$ in \cref{solidangle}
  does not have a clean $1/|\brp-\br|^m$ near-singularity. However, if we pull out
  the factor $|\brp-\br|^2$ from the denominator in \cref{solidangle}, we obtain
  $(1+\cos\varphi)$ with $\varphi$ the angle between $\bu$ and $\brp-\br$. By the condition stated in \cref{thm:dlp1form} on $\bu$, $\cos\varphi$ is greater than $0$ in most cases. Thus, the extra factor $(1+\cos\varphi)$ can be treated by a slight upsampling or very mild adaptive integration.
\end{remark}

\subsection{Evaluation of the normal derivative of the single layer potential}
\label{sec:sprimeeval}
We denote the quaternions $(0,\bnu')$ and $(0,\bnu)$ by $\nu'$ and $\nu$, respectively.
Since $\bnu$ is a unit vector, we have $\nu \bar{\nu}=1$.
In order to evaluate
$\sprime[\sigma](\brp)=-\int_P\bnu'\cdot\nabla G(\brp,\br) \sigma(\br)da(\br)$,
where $\nabla$ is with respect to $\br$,
we rewrite the integrand using quaternion algebra as follows:
\be
\ba
\bnu'\cdot\nabla G(\brp,\br) \sigma(\br)da(\br)
&=\left[(0,\bnu') (0,\nabla G(\brp,\br)) \sigma(\br)da(\br)\right]_0\\
&=\left[\nu'(0,\nabla G(\brp,\br))\nu (\bar{\nu} \sigma(\br))da(\br)\right]_0.
\ea\label{sprime2formrep}
\ee
We observe that $\nu'$ depends on the target point $\brp$ and thus can be pulled
out the integration. We now treat $\bar{\nu}\sigma=(0,-\bnu(\br))\sigma(\br)$
as a quaternion density, and approximate it via harmonic polynomials:
\be\label{sprimedensappr}
(0,\sigma(\br)\nu(\br)) \approx \sum_{l=1}^p\sum_{m=1}^l
(0, \nabla H^{(l,m)}(\br)) \wlm, \quad \br \in P.
\ee
And the linear system for obtaining the quaternion coefficient $\clm$
is given by
\be
\label{sprimedensitylinearsystem}
\sum_{l=1}^p\sum_{m=1}^l
(0, \nabla H^{(l,m)}(\br^{(i,j)})) \wlm = (0,\sigma(\br^{(i,j)})\bnu(\br^{(i,j)})), \quad 0\le i+j \le p-1.
\ee
We observe that \cref{sprimedensappr} is very similar to the quaternion harmonic density
approximation \cref{dlpdensappr} for the double layer potential, and that \cref{sprimedensitylinearsystem}
has the same system matrix as \cref{dlpdensitylinearsystem} for the DLP evaluation.
Thus, the evaluation of $\sprime$ is very close that of $\dlp$. The difference is that
the right hand side in the density approximation is $(\mu,\bzero)$ in \cref{dlpdensappr}
for $\dlp$, and $(0,\sigma\bnu)$ in \cref{sprimedensappr} for $\sprime$. We also need
to multiply the quaternion $\nu'=(0,\bnu')$ from left for $\sprime$ in postprocessing.
In short, we have
\be
\ba
\sprime[\sigma](\brp) 
&\approx \left[(0,\bnu')\sum_{l,m} \int_p\dlptform(\brp,\br) \wlm\right]_0\\
& = \left[(0,\bnu')\sum_{l,m} \int_{\partial P}\dlpoform(\brp,\br) \wlm\right]_0,
\ea\label{sprime1formrep}
\ee
where $[\cdot]_0$ denotes the scalar part of the quaternion,
2-forms $\dlptform$ are defined by \cref{dlp2form}, and 1-forms $\dlpoform$
are defined by \cref{dlp1form}, whose calculation is discussed in detail in
\cref{thm:dlp1form} and \cref{sec:1formeval}.

\subsection{Evaluation of the normal derivative of the double layer potential}
\label{sec:dprimeeval}
The normal derivative, or the gradient, of the double layer potential
can be computed by differentiating the associated 1-forms. By combining
\cref{dlp1formrep},
\cref{cdlp1formrep}--\cref{bomegadef} and \cref{dlp1formrep2}, we have
\be
\ba
\dlp[\mu](\brp)& \approx
-\sum_{l,m} \int_{\partial P} \left[\dlpoform(\brp,\br) \clm\right]_0\\
&=-\sqrt{2} \left[\sum_{l,m}\Im\left(\int_{\partial P}
  \left(\cdlpoform(\brp,\br)+\gammalm(\brp,\br)\right)\right)\clm\right]_0\\
&:=-\sqrt{2} \left[\sum_{l,m}\Im
  \left(I[\cdlpoform](\brp)+I[\gammalm](\brp)\right)\clm\right]_0.
\ea
\ee
Here, $I[\cdlpoform]$ is given by \cref{cdlp1dintegral} and $I[\gammalm]$
is defined by the formula
\be
I[\gammalm](\brp)=\int_{\partial P}\gammalm(\brp,\br)=\frac{1}{4\pi} 
\Omega(\brp) (0, \nabla \slm(\brp)),
\ee
where $\Omega(\brp)=(\Omega_0(\brp),\bOmega(\brp))$ with $\Omega_0$ and
$\bOmega$ given by \cref{solidangle} and the second equation in \cref{lineintegrals},
respectively. Product rule can then be applied to differentiate $\Omega(\brp)$
and $\nabla\slm(\brp)$ to obtain the derivatives of $I[\cdlpoform](\brp)$
and $I[\gammalm](\brp)$.
Straightforward calculation leads to the following expressions:
\be\label{dprimelineintegrals}
\ba
\frac{\partial \Qjk(\brp)}{\partial \bnu'}
&=\int_{\partial P} \left(0,\frac{\bnu'}{|\brp-\br|}
-\frac{\left(\bnu'\cdot(\brp-\br)\right)(\brp-\br)}{|\brp-\br|^3}\right)
\left(0,d\nabla S^{(j,k)}(\br)\right),\\
\frac{\partial \bOmega(\brp)}{\partial \bnu'}
&=\int_{\partial P} \frac{\bnu'\cdot(\brp-\br) d\br}{|\brp-\br|^3},\\
\frac{\partial \Omega_0(\brp)}{\partial \bnu'}
&=\int_{\partial P}\frac{(\bnu'\times\bu)\cdot d\br}{|\brp-\br|(|\brp-\br|+\bu\cdot(\brp-\br))}
-\frac{\left(\bnu'\cdot\bu\right) ((\brp-\br)\times\bu)\cdot d\br}{|\brp-\br|(|\brp-\br|+\bu\cdot(\brp-\br))^2}\\
&-\frac{\left(\bnu'\cdot(\brp-\br)\right)((\brp-\br)\times\bu)\cdot d\br}{|\brp-\br|^2(|\brp-\br|+\bu\cdot(\brp-\br))^2}
-\frac{\left(\bnu'\cdot(\brp-\br)\right)((\brp-\br)\times\bu)\cdot d\br}{|\brp-\br|^3(|\brp-\br|+\bu\cdot(\brp-\br))}.
\ea
\ee

We observe that both $\partial \Qjk(\brp)/\partial \bnu'$
and $\partial \bOmega(\brp)/\partial \bnu'$ can be computed directly
via the singularity-swap quadrature, while
$\partial \Omega_0(\brp)/\partial \bnu'$ can be computed
via the singularity-swap quadrature with slight upsampling or mild adaptive integration.

\section{Numerical algorithm}\label{sec:algorithm}
The following algorithm summarizes the steps in the evaluation of the Laplace
double layer potential.

\vspace{.2in}

\noindent
\begin{center}
{\bf Close evaluation of the Laplace double layer potential}
\end{center}
\vspace{.2in}

{{\bf Input/output}:} 
Suppose that $P$ is a smooth triangular patch on $S$ (we assume that translation and rotation have been carried out so that three vertices
of $P$ are on the $xy$ plane and the center of $P$ is the origin),
$\mu$ is the given DLP density, and $\brp$ is
the given target point. Let $p$ be the polynomial approximation order on $P$
and $\np=p(p+1)/2$,
and $\br^{(i,j)}$ ($1\le i+j\le p$) be a set of interpolation/collocation nodes on $P$.
Let $\tp$ (say, $\tp=2p$) be the polynomial approximation order on $\partial P$, and
$\br_i$ ($i=1,\ldots,3\tp$) be a set of interpolation nodes on three edges
of $\partial P$.
Compute the DLP
$\dlp[\mu](\brp)=\int_P \nabla G(\brp,\br)\cdot\bnu(\br)\mu(\br) da(\br)$.

\vspace{.1in}

\noindent
{{\bf Step 1}:} For each source point $\br^{(i,j)}$ on $P$,
compute $\slm(\br^{(i,j)})$ and $\nslm(\br^{(i,j)})$
for all $1\le m\le l\le p$ via the recurrence relation \cref{associatelegendrerecurrence}. Construct
the system matrix in \cref{dlpdensitylinearsystem}. 
[{\em This step depends only on sources.
    The cost of constructing the system matrix is $O(\np^2)=O(p^4)$.}]

\vspace{.1in}

\noindent
{{\bf Step 2}:} Solve the linear system \cref{dlpdensitylinearsystem} to obtain
the quaternion coefficient vector $c^{(l,m)}$ for all $1\le m\le l\le p$.
[{\em This step depends only on sources.
    The cost of solving the $4\np\times 4\np$ linear system is $O((4\np)^3)=O(p^6)$.}]

\vspace{.1in}

\noindent
{{\bf Step 3}:} For each source point $\br_i$ on $\partial P$,
compute $\slm(\br_i)$ and $\nslm(\br_i)$ for all $1\le m\le l\le p$ via
the recurrence relation \cref{associatelegendrerecurrence}. 
[{\em This step depends only on sources.
    The cost is $O(p^3)$.}]

\vspace{.1in}

\noindent
{{\bf Step 4}:} For the target point $\brp$, compute $\slm(\brp)$ and $\nslm(\brp)$ via
the recurrence relation \cref{associatelegendrerecurrence}. 
[{\em This step depends only on the target.
    The cost is $O(p^2)$.}]

\vspace{.1in}

\noindent
{{\bf Step 5}:} For each edge on $\partial P$, find the value of $t_0$ for the
given target $\brp$ and compute $\mathcal{I}_i$ in \cref{ssq1dintegrals}
for $0\le i\le \tp-1$ via the recurrence relation in \cite{afklinteberg2021bit}.
[{\em This step depends on the target and sources on $\partial P$.
    The cost is $O(p)$.}]

\vspace{.1in}

\noindent
{{\bf Step 6}:} For the target point $\brp$, compute the line integrals
$\Qjk(\brp)$ in \cref{lineintegrals} for all $1\le k\le j\le p$ using formulas similar to \cref{ssq1dintegrals2}.
[{\em This step depends on the target and sources on $\partial P$.
    For each $(j,k)$ pair, the work involoves elementwise multiplication and
    a matrix-vector product. Thus, the cost of this step is $O(p^3)$.}]

\vspace{.1in}

\noindent
{{\bf Step 7}:} For the target point $\brp$, compute the integrals
$I[\cdlpoform](\brp)$ in \cref{cdlp1dintegral} for all $1\le m\le l \le p$. Also compute
$I[\gammalm](\brp)$, the integrals of $\gammalm$, for $\gammalm$ defined in
\cref{gammalmdef}. 
[{\em This step depends on the target and sources on $\partial P$.
    Using the fast translation scheme in \cref{rem:fasttranslation}, the cost of
    this step is $O(p^3)$.}]

\vspace{.1in}

\noindent
{{\bf Step 8}:} Compute the DLP by combining \cref{dlp1formrep},
\cref{dlp1formrep2}, and \cref{cdlp1dintegral}. That is, compute
\be\label{wlmdef}
\Dlm(\brp) = -\sqrt{2}\Im\left(I[\cdlpoform](\brp)+I[\gammalm](\brp)\right),
\quad 1\le m \le l \le p,
\ee
then set
\[\dlp[\mu](\brp)=\sum_{l,m}\left[\Dlm(\brp)\clm\right]_0,\] 
where $[\cdot]_0$ denotes the scalar part of the quaternion.
[{\em This is basically the evaluation of the quaternion product for two quaternion
    vectors. The cost of this step is $O(p^2)$.}]

\begin{remark}
 The above algorithm works for self-interaction, near-interaction, and close
 evaluation outlined in \cref{sec:intro}, providing a unified high-order scheme
 for all three interactions.
\end{remark}

\begin{remark}
  The above algorithm assumes that the density is given. In the case of constructing
  the system matrix during the solve phase of integral equation methods,
  we first skip {\bf Step 2}. Then instead of computing the quaternion dot product of
  $w^T c$ in {\bf Step 8}, we compute $[w^T(\brp) A^{-1}]_0$ to obtain a row vector
  corresponding to the given target $\brp$. Here, $w^T=[\wlm], 1\le m\le l \le p$ is
  the quaternion
  row vector in \cref{wlmdef}, and $A$ is the system matrix constructed in {\bf Step 1}.
\end{remark}

\begin{remark}
(a)  The evaluation of the single layer potential is similar, with additional scalar
  density approximation in \cref{slpdensitylinearsystem} and calculation of
  integrals of 1-forms in \cref{cslp1formrep}.
(b) The evaluation of the normal derivative of the SLP is almost identical. In
  {\bf Step 2}, we solve the linear system \cref{sprimedensitylinearsystem} instead.
  In {\bf Step 8}, we evaluate the last expression in \cref{sprime1formrep}.
(c) The evaluation of the normal derivative of the DLP is almost identical.
  In {\bf Step 6}, we also need to compute line integrals in \cref{dprimelineintegrals}
  as well.
\end{remark}

\begin{remark}
  In order to evaluate layer potentials for a collection of target points, we
  will need to divide the surface $S$ into a collection of triangular patches and
  identify the targets that require special quadrature construction for each patch.
  These two
  steps are fairly standard in scientific computing and we omit the details. For example,
  the second step can be carried out using either an adaptive octree~\cite{biros2008sisc}
  or a $k$-d tree (see, for example, \cite{saye2014camcs,yu2015acm}).
\end{remark}

\begin{remark}
  The cost associated with each source patch is $O(p^6)$ with $O(p^2)$ source points
  on each patch. The cost associated with each target is $O(p^3)$. Thus, the total
  cost is $O(N_S p^4 + N_T p^3)$, with $N_S$ the total number of source points on $S$
  and $N_T$ the total number of target points requiring special quadrature. During the
  solve phase of integral equation methods, $N_T$ is a small factor of $N_S$
  (about $2-4$ for reasonable geometries). The actual performance depends on many other
  factors such as the quality of implementation, memory latency, and prefactors in
  lower order terms.
\end{remark}

\section{Numerical examples}\label{sec:numericalexamples}
In this section, we illustrate the performance of our new quadrature scheme. The code is
written in Fortran and compiled using gfortran 13.2.0 with optimization flag -O3.
All experiments were run in single-threaded mode on a 3.60GHz AMD EPYC 9474F CPU.

For typical applications in integral equation methods, the evaluation of layer potentials
is needed in two stages -- the solve phase and the evaluation phase. Below, we will use
the double layer potential as the example to explain the main steps, since other layer
potentials can be treated in an almost identical manner.
Denote the discrete matrix for the double layer potential by $\bD$.
During the solve phase, the double layer
potential operator is a map from the boundary to itself and $\bD$ is a square
matrix of
size $N_S\times N_S$, where $N_S$ is the total number of source points on the boundary.
In order to apply $\bD$ fast to a given vector, it is necessary to split it into two
parts:
\be\label{dlpsplitting}
\bD = \bD_{\rm smooth} + \bD_{\rm RRQ},
\ee
where $\bD_{\rm smooth}$ is obtained by discretizing $\dlp$ via smooth quadrature
on each patch, and $\bD_{\rm RRQ}$ is the quadrature correction part due to the singularity
or near-singularity of the kernel. To be more precise, the weights in $\bD_{\rm smooth}$ 
depend only on the source points (i.e., functions of row indices only), while the weights
in $\bD_{\rm RRQ}$ depend on both sources and targets (i.e., functions of row and column
indices).
Since the quadrature weights depend only on sources, they can be combined with
other source-dependent factors such as the jacobian into the given vector. This makes
fast algorithms such as the FMM directly applicable to accelerate the matrix-vector
product for $\bD_{\rm smooth}$ in $O(N_S)$ cost, with the prefactor depending on the prescribed
precision. On the other hand, $\bD_{\rm RRQ}$ is only nonzero for the targets that require
our special quadrature for a given source patch. For typical geometries, the number of
targets close to any patch is $O(1)$. Thus, $\bD_{\rm RRQ}$ is a sparse matrix and the cost
of the associated matrix-vector product is again $O(N_S)$, where the prefactor depends on
the quadrature order $p$, the quality of triangulation, and the geometry of the boundary.

During the evaluation phase, layer potentials are evaluated on a set of volumetric grids.
Thus, the discrete matrix $\bD$ is a rectangular matrix of size $N_T\times N_S$, where
$N_T$ is the total number of target points in the volume. For typical applications, the
mesh size $h$ for the discretization of the boundary surface and the volume is roughly
equal to each other. Thus, $N_T=O(1/h^3)$ and $N_S=O(1/h^2)$. The application of $\bD$ 
to a known density is almost identical to the solve phase, as outlined above, with the
cost of both parts being $O(N_S+N_T)$. The
difference is that there are now many more target points, leading to different ratios
in the computation time for these two parts. For both cases, we denote the apply
time of $\bD_{\rm smooth}$ using the FMM by $T_{\rm FMM}$ and the time for building
$\bD_{\rm RRQ}$
by $T_{\rm RRQ}$ (the apply time of $\bD_{\rm RRQ}$ is negligible).
Similar to \cite{greengard2021jcp},
We use the state-of-the-art software package \texttt{FMM3D}~\cite{fmm3d}
for the fast multipole method.
It is clear that $T_{\rm FMM}$ serves as an effective benchmark
for evaluating the efficiency of our quadrature scheme. We remark here that the total time
of the solve phase for iterative solvers such as GMRES~\cite{gmres} is approximately
$T_{\rm RRQ} + N_{\rm iter} T_{\rm FMM}$,
where $N_{\rm iter}$ is the total number of GMRES iterations required to reach a prescribed
tolerance.

The accuracy study is conducted in a similar manner. During the solve phase, the target
points coincide with the source points on the boundary, involving only self-interactions
and near interactions. In contrast, for the evaluation phase, the target points are
generally located off the boundary within the volume—this is referred to as close
evaluation.
As discussed in \cref{sec:intro},
close evaluation is particularly challenging due to the limited availability of analytic
techniques and because the target points can be arbitrarily close to the surface,
requiring multiple levels of refinement in adaptive integration to resolve the
near-singularity.

In the following examples, we consider
the exterior Dirichlet problem for the Laplace equation:
\be\label{exteriordirichlet}
\ba
\Delta u(\brp) &=0, \quad \brp \in D,\\
u(\brp) &= f(\brp), \quad \brp \in S,
\ea
\ee
where $S$ is a smooth closed surface, and $D$ is the exterior domain.
In order to check the accuracy and convergence order, 
the boundary data $f$ is chosen to be the restriction of an analytic solution
on $S$, and the analytic solution is
constructed as the superposition of the fundamental solutions with
point sources lying on the interior of $S$:
\be\label{exactsolution}
u_{\rm exact}(\brp) = \sum_{j=1}^{4} \frac{c_j}{|\brp - \br_j|}, \quad \br_j \in \bar{D}^c,
\ee
where $\br_j$ ($j=1,\ldots,4$) are their locations, and $c_j$ are their strengths.
Both $\br_j$ and $c_j$ are chosen randomly.

We use the so-called combined field integral
representation~\cite[Chapter 3]{ColtonKress98}
\be\label{integralrepresentation}
u(\brp) = (\slp+\dlp)[\sigma](\brp), \quad \brp\in D,
\ee
and the resulting boundary integral equation (BIE)
is
\be\label{extdiribie}
\frac{1}{2}\sigma(\brp)+(\slp+\dlp)[\sigma](\brp) = f(\brp), \quad \brp\in S.
\ee
The boundary is divided into triangle patches, where the triangulation is carried
out more or less uniformly in the parameter space. 
The Vioreanu-Rokhlin~\cite{vioreanu2014sisc} nodes as the collocation points on each
patch. The associated Vioreanu-Rokhlin quadrature is used to discretize the smooth
interaction, and our quadrature scheme is used to discretize self- and near- interactions.
The resulting linear system is solved via GMRES with prescribed tolerance $10^{-14}$.
A Cartesian tensor grid of equispaced points of certain size is
placed on a rectangular box enclosing $S$ and evaluations of the numerical solution
to \cref{exteriordirichlet}
are carried out at the grid points that are in the exterior
domain. The relative $l_\infty$ error is computed via the formula
\be\label{relativeerror}
E_\infty = \frac{\| u_{\rm num}-u_{\rm exact}\|_\infty}{\|u_{\rm exact}\|_\infty}.
\ee
Note that $E_\infty$ is the cumulative error from both the solve phase and the evaluation
phase.

\subsection{Study of accuracy and convergence order}
\subsubsection{A warped torus}
The boundary $S$ is a warped torus 
\be\label{eq:torus}
\br(\theta,\phi) = \begin{bmatrix}
\left(a+f(\theta,\phi)\cos(\theta)\right)\cos(\phi)\\
\left(a+f(\theta,\phi)\cos(\theta)\right)\sin(\phi)\\
f(\theta,\phi)\sin(\theta)
\end{bmatrix}, \quad (\theta,\phi)\in [0,2\pi]^2,
\ee
where $f(\theta,\phi) = b + \omega_c\cos(\omega_n\phi + \omega_m\theta)$. We set
the parameters to $a=1$, $b=1/2$, $\omega_c=0.065$, $\omega_n=5$ and $\omega_m=3$.
See the image on the left side of \cref{fig:torus}.

\begin{figure}[t]
\centering
\includegraphics[height=45mm]{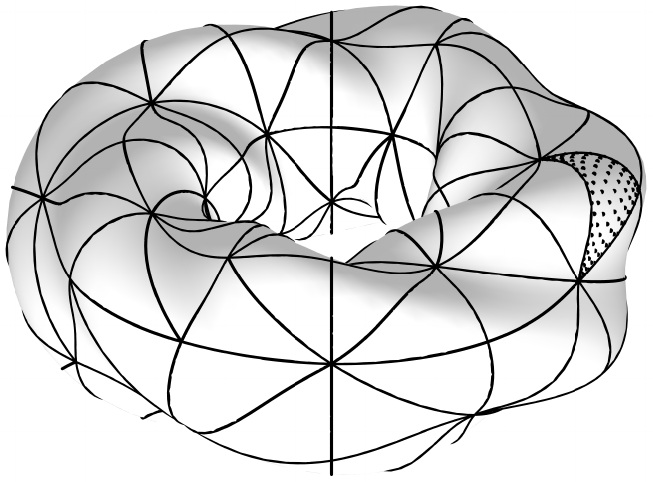} 
\hspace*{2mm}
\includegraphics[height=45mm]{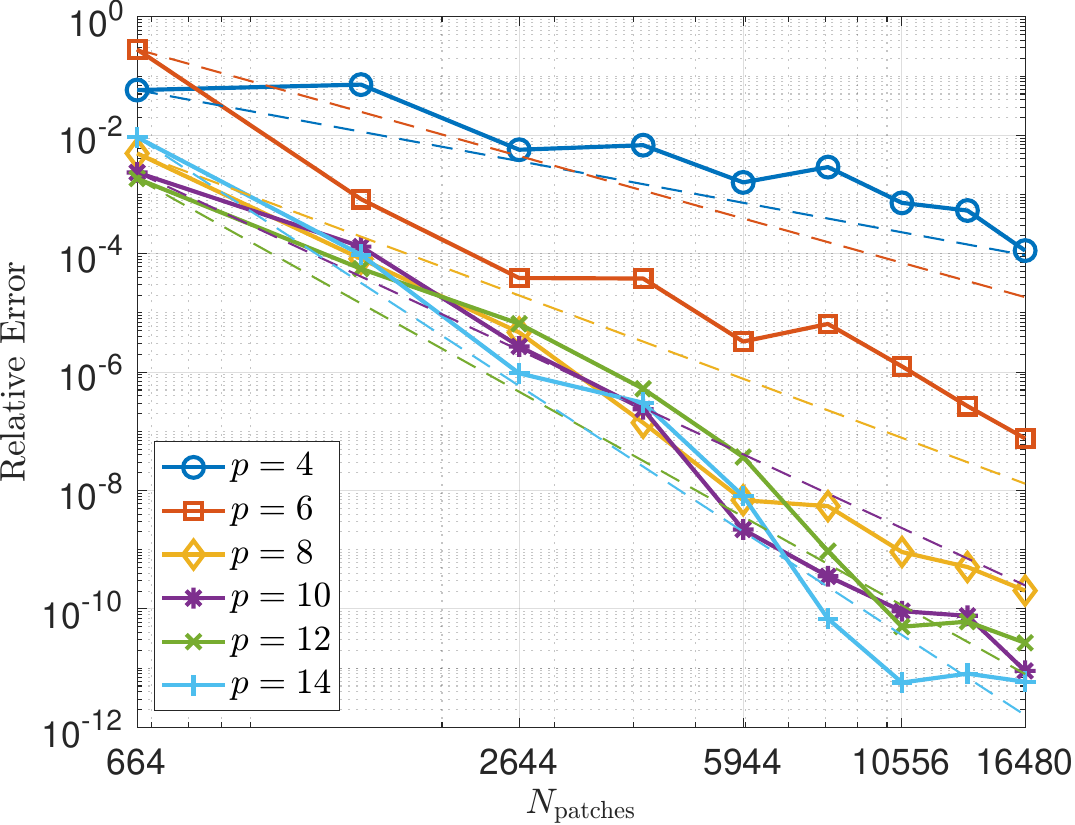}
\caption{\sf Left: triangulated warped torus boundary. Right:
  relative $l_\infty$ errors as functions of $N_{\rm patches}$ for various orders $p$.}
\label{fig:torus}
\end{figure}

\begin{figure}[!ht]
\centering
\includegraphics[height=45mm]{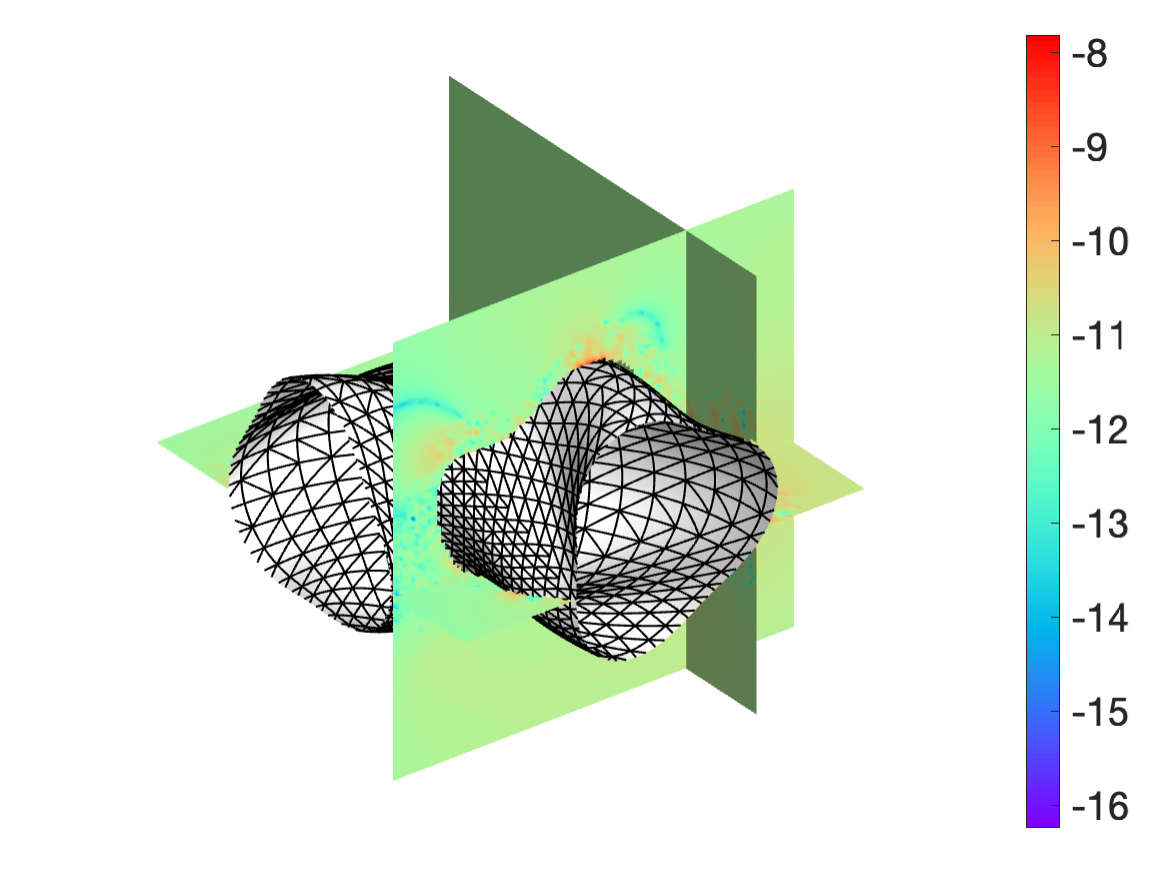} 
\hspace*{2mm}
\includegraphics[height=45mm]{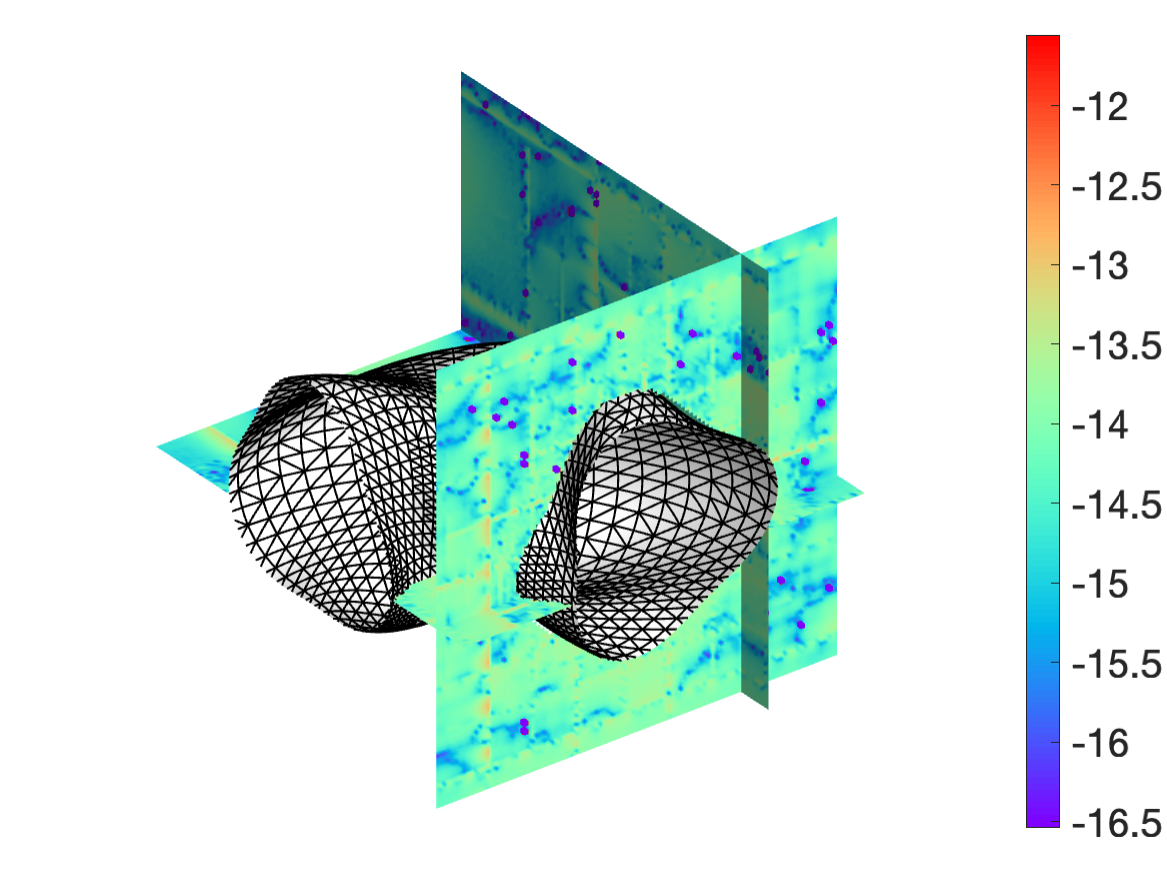}
\caption{\sf Slice plots of $\log_{10}$ of the pointwise relative errors
  for the warped torus.
  Left: $p=8,\ n_{\theta} = 36,\ n_{\phi} = 72$.
  Right: $p=14,\ n_{\theta} = 48,\ n_{\phi} = 96$.}
\label{fig:torussliceplot}
\end{figure}

The torus is discretized into $n_{\theta}\times n_{\phi}$ equi-sized rectangles, with
each further subdivided into two equal triangular patches in the parameter space.
Thus, the total number of patches $N_{\rm patches}$ is $2n_{\theta} n_{\phi}$. 
The equispaced tensor grid points are placed on the rectangular box
$[-0.5,2]\times[-0.5,2]\times[-1.25,1.25]$ and numerical solution is evaluated at
$892932$ points in the exterior of the warped torus.
\Cref{table:torus} shows $E_\infty$ defined in \eqref{relativeerror}
at sample values of $n_{\theta}\times n_{\phi}$ for $p=4, 6, \ldots, 14$.
The image on the right side of \cref{fig:torus} plots these data points as
a function of $N_{\rm patches}$ for these six values of $p$, with dashed lines showing
the expected convergence order. \Cref{fig:torussliceplot} plots pointwise
relative errors at three orthogonal slices where such errors are maximal.

\begin{table}[!ht]
\caption{\sf Relative $l_\infty$ errors $E_\infty$ for the warped torus example at
  sample values of $n_{\theta}$, $n_{\phi}$, and $p$. The total number of patches is
  $N_{\rm patches}=2n_{\theta}n_{\phi}$ and the total number of discretization points
  is $N=N_{\rm patches} p(p+1)/2$.}
\centering
\begin{tabular}{|c|c|c|c|c|c|}
\hline
{\diagbox{$n_{\theta}\times n_{\phi}$}{$p$}} & 6 & 8 & 10 & 12 & 14\\
\hline
12$\times$24  & 2.83e-01 & 4.96e-03 & 2.36e-03 & 1.86e-03 & 9.31e-03\\
18$\times$36  & 8.37e-04 & 8.22e-05 & 1.32e-04 & 5.59e-05 & 9.62e-05\\
24$\times$48  & 3.89e-05 & 4.69e-06 & 2.72e-06 & 6.57e-06 & 9.55e-07\\
30$\times$60  & 3.82e-05 & 1.40e-07 & 2.37e-07 & 5.25e-07 & 3.04e-07\\
36$\times$72  & 3.29e-06 & 6.87e-09 & 2.20e-09 & 3.68e-08 & 8.11e-09\\
42$\times$84  & 6.52e-06 & 5.45e-09 & 3.56e-10 & 9.47e-10 & 6.83e-11\\
48$\times$96  & 1.25e-06 & 9.17e-10 & 9.18e-11 & 5.02e-11 & 5.72e-12\\
54$\times$108 & 2.67e-07 & 5.11e-10 & 7.66e-11 & 6.11e-11 & 8.09e-12\\
60$\times$120 & 7.60e-08 & 2.04e-10 & 9.00e-12 & 2.68e-11 & 5.90e-12\\
\hline
\end{tabular}\label{table:torus}
\end{table}

\subsubsection{A cushion-shaped geometry}
Next, the boundary is a cushion-shaped surface 
\be\label{cushion}
\br(\theta,\phi) = \begin{bmatrix}
  f(\theta,\phi)\cos(\theta)\cos(\phi)\\
  f(\theta,\phi)\sin(\theta)\cos(\phi)\\
  f(\theta,\phi)\sin(\phi)
 \end{bmatrix}, \quad (\theta,\phi) \in [0,\pi]\times[0,2\pi],
\ee
where the radial function is $f(\theta,\phi) = \left(4/5+1/2\left(\cos(2\theta)-1\right)\left(\cos(4\phi)-1\right)\right)^{1/2}$. See the image on the left side of \cref{fig:cushion}. We use the ``cubed sphere'' in \cite{ronchi1996jcp} to divide the cushion into
six big charts, with each chart further divided into $2n_{\theta}\times n_{\phi}$ triangles. Thus,
the total number of patches $N_{\rm patches}$ is $12n_{\theta}n_{\phi}$. \Cref{table:cushion_error}
shows relative $l_\infty$ error at sample data points of $N_{\rm patches}$ for $p=4,6,\ldots,14$. The image on the right side of \cref{fig:cushion} plots the numerical convergence order with these data points and expected convergence order in dashed lines. Finally,
\cref{fig:cushionsliceplot} show the pointwise relative errors at three orthogonal
slices.

\begin{figure}[!ht]
\centering
\includegraphics[height=45mm]{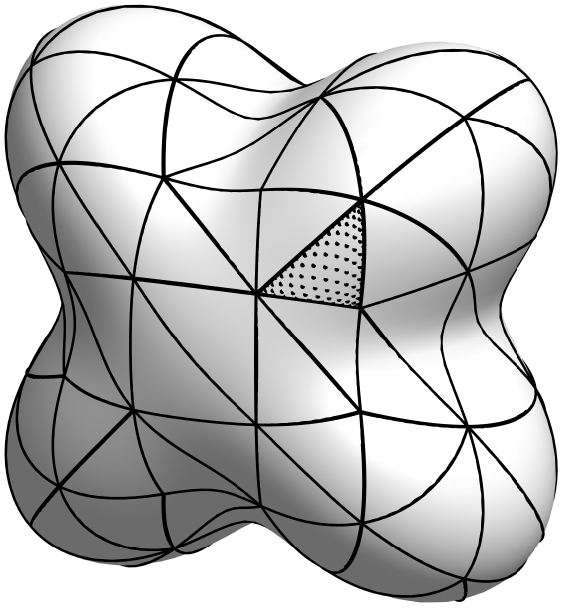} 
\hspace*{2mm}
\includegraphics[height=45mm]{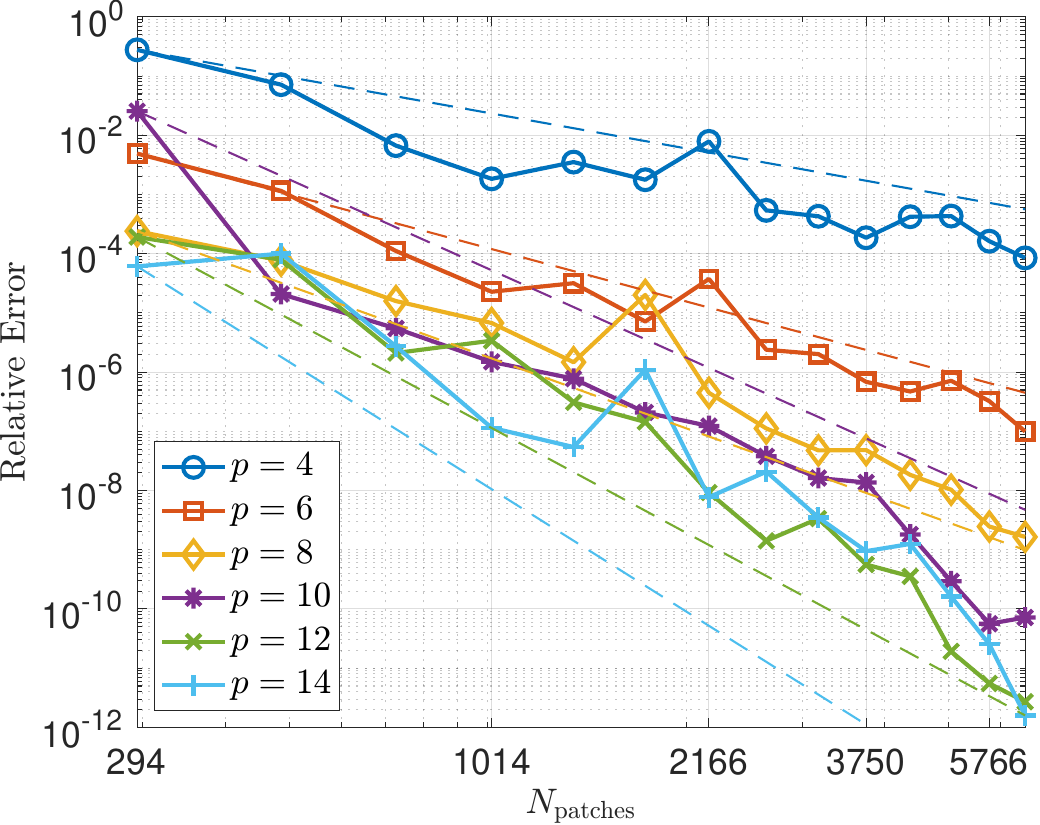}
\caption{\sf Left: triangulated cushion-shaped boundary. Right:
  relative $l_\infty$ errors as functions of $N_{\rm patches}$ for various orders $p$.}
\label{fig:cushion}
\end{figure}

\begin{figure}[!ht]
\centering
\includegraphics[height=45mm]{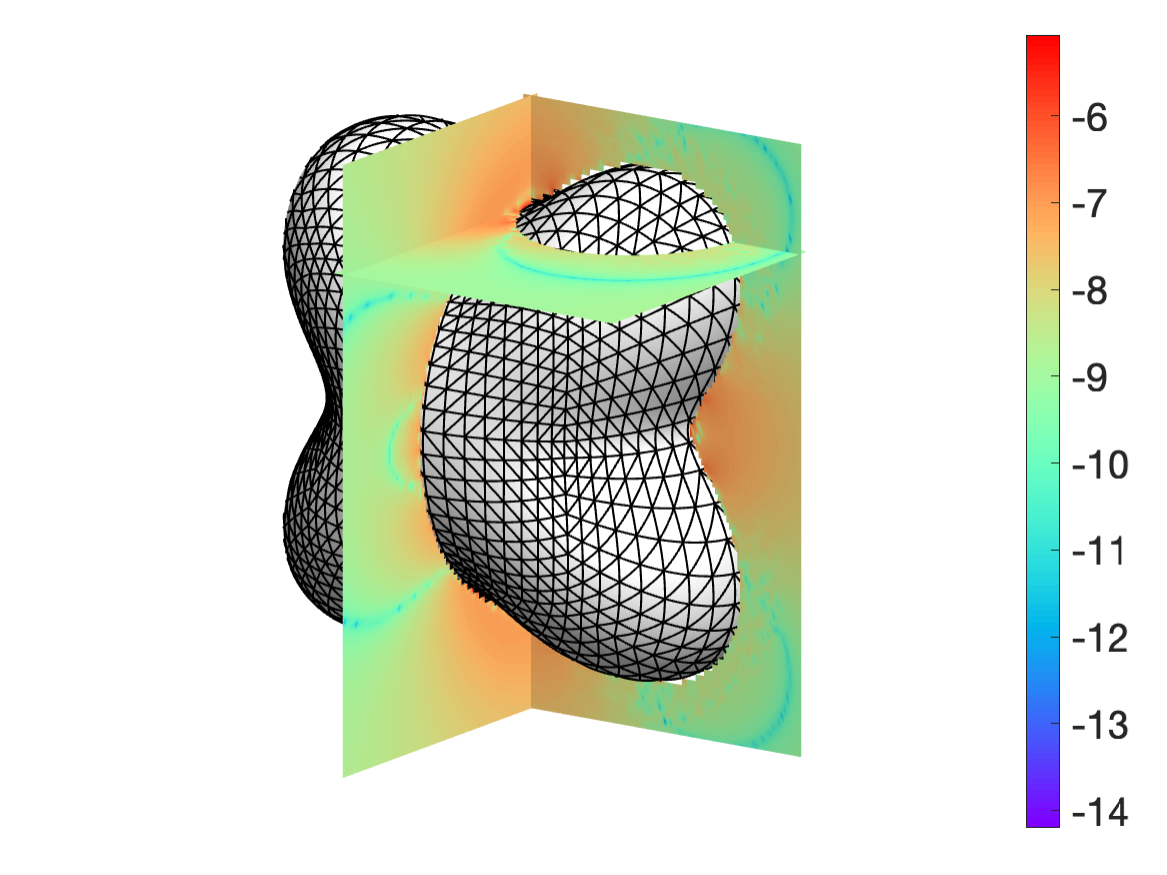}
\hspace*{2mm}
\includegraphics[height=45mm]{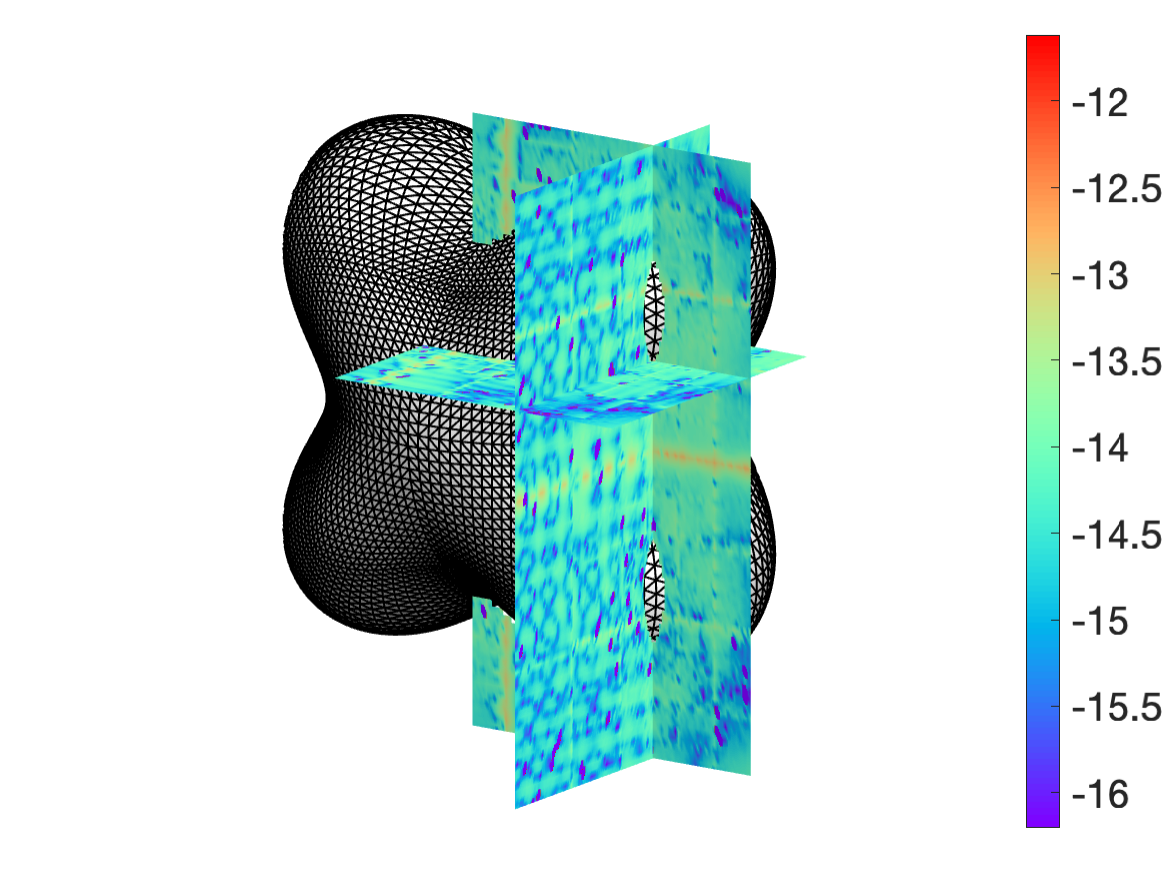}
\caption{\sf Slice plots of $\log_{10}$ of the pointwise relative errors for
  the cushion-shape boundary.
  Left: $p=6, n_{\theta} = 17, n_{\phi} = 17$. 
  Right: $p=12, n_{\theta} = 33, n_{\phi} = 33$.}
\label{fig:cushionsliceplot}
\end{figure}

\begin{table}[t]
\caption{\sf Relative $l_\infty$ errors $E_\infty$ for the cushion shaped example
  at sample values of $n_{\theta}$, $n_{\phi}$, and $p$. The total number of patches is
  $N_{\rm patches}=12n_{\theta}n_{\phi}$ and the total number of discretization points
  is $N=N_{\rm patches} p(p+1)/2$.}
\centering
\begin{tabular}{|c|c|c|c|c|c|}
\hline
{\diagbox{$12\times n_{\theta}n_{\phi}$}{$p$}} & 6 & 8 & 10 & 12 & 14\\
\hline
$12\times 9^2$    & 9.75e-04 & 6.53e-05 & 2.16e-05 & 7.57e-05 & 8.25e-05 \\ 
$12\times 13^2$   & 2.31e-05 & 6.11e-06 & 1.74e-06 & 3.60e-06 & 1.03e-07 \\ 
$12\times 17^2$   & 8.76e-06 & 1.64e-05 & 1.74e-07 & 1.48e-07 & 1.22e-06 \\ 
$12\times 21^2$   & 2.62e-06 & 1.32e-07 & 3.06e-08 & 1.26e-09 & 2.07e-08 \\ 
$12\times 25^2$   & 6.17e-07 & 3.94e-08 & 1.19e-08 & 4.41e-10 & 8.74e-10 \\ 
$12\times 29^2$   & 6.84e-07 & 9.26e-09 & 2.64e-10 & 1.54e-11 & 1.41e-10 \\ 
$12\times 33^2$   & 1.21e-07 & 1.42e-09 & 6.22e-11 & 2.44e-12 & 1.44e-12 \\ 
\hline
\end{tabular}\label{table:cushion_error}
\end{table}	

In both examples, we have used a more or less uniform triangulation
in the parameter space. However, both geometries have some fine details that
could be best resolved with some adaptivity employed in the triangulation.
The somewhat noisy data points in the convergence plots are likely due to
the under-resolved nonadaptive triangulation.
We would also like to remark that 
most figures and tables in \cite{zhu2021thesis,zhu2022sisc}
show the convergence for $p$ up to $7$ except Fig.~3, where values of $p$ up to $10$
are shown. While the original scheme in \cite{zhu2021thesis,zhu2022sisc} can achieve about ten digits
of accuracy with adaptive integration for computing line integrals, our improved scheme
achieves about twelve digits of accuracy.

\subsection{Study of efficiency}
Here, we demonstrate the efficiency of our quadrature scheme during the solve
phase and the evaluation phase. For both cases, we use $T_{\rm FMM}$ as the benchmark
to measure the relative throughput of our quadrature scheme in points
processed per second per core.

\subsubsection{Green's identity on a stellarator geometry}
\begin{figure}[t]
\centering
\includegraphics[height=45mm]{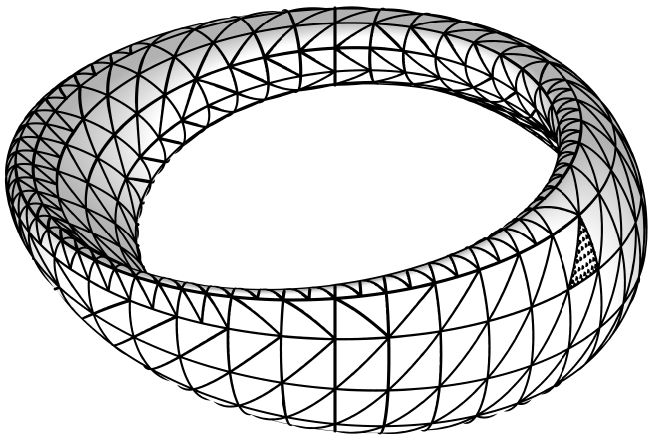}
\caption{\sf Triangulated stellarator-shaped geometry.}
\label{fig:stellarator}
\end{figure}

In the third example, we demonstrate the performance of our scheme by verifying the Green's identity
\be\label{greenformula}
\frac{1}{2}u(\brp) = \slp\left[\frac{\partial u}{\partial n}\right](\brp)-\dlp[u](\brp),
\quad \brp\in S.
\ee
The boundary is the surface of a typical stellarator
\begin{equation}\label{stellarator}
\br(u,v) = \sum_{i=-1}^{2} \sum_{j=-1}^{1} \delta_{i,j}
\begin{bmatrix}
  \cos{v} \cos{((1-i)\, u+j\, v)} \\
  \sin{v} \cos{((1-i) \, u+j \, v)} \\
  \sin{((1-i)\, u+j \, v)} 
\end{bmatrix} \, , \quad (u,v)\in [0,2\pi]^2,
\end{equation}
where the non-zero coefficients are $\delta_{-1,-1}=0.17$, 
$\delta_{-1,0} = 0.11$, $\delta_{0,0}=1$, $\delta_{1,0}=4.5$, $\delta_{2,0}=-0.25$, $\delta_{0,1} = 0.07$, and $\delta_{2,1}= -0.45$. See the image on the left side of \cref{fig:stellarator}. The triangulation is carried out by dividing the square in the parameter space into
$n_u\times n_v$ equi-sized rectangles, with each rectangle further subdivided into two
equal triangles. The stellarator has high curvature regions that rotate along one of
the angular directions, introducing significant challenges for the accurate evaluation
of layer potentials.

\begin{figure}
\centering
\includegraphics[height=45mm]{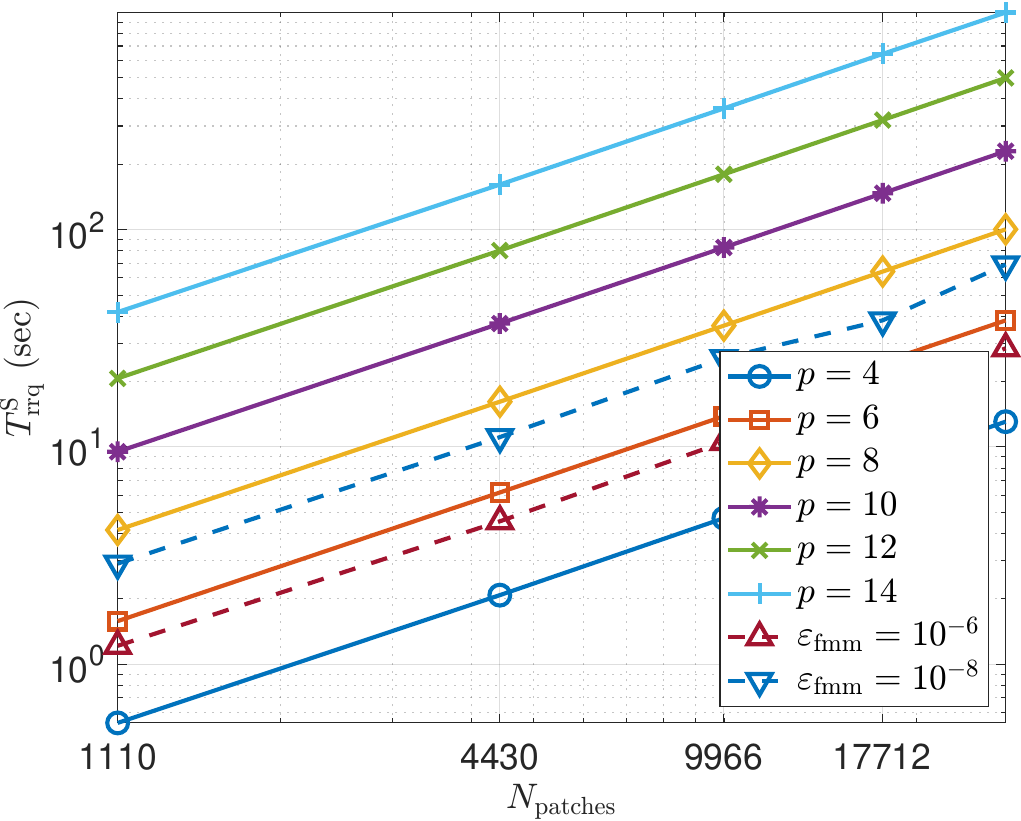}
\hspace*{2mm}
\includegraphics[height=45mm]{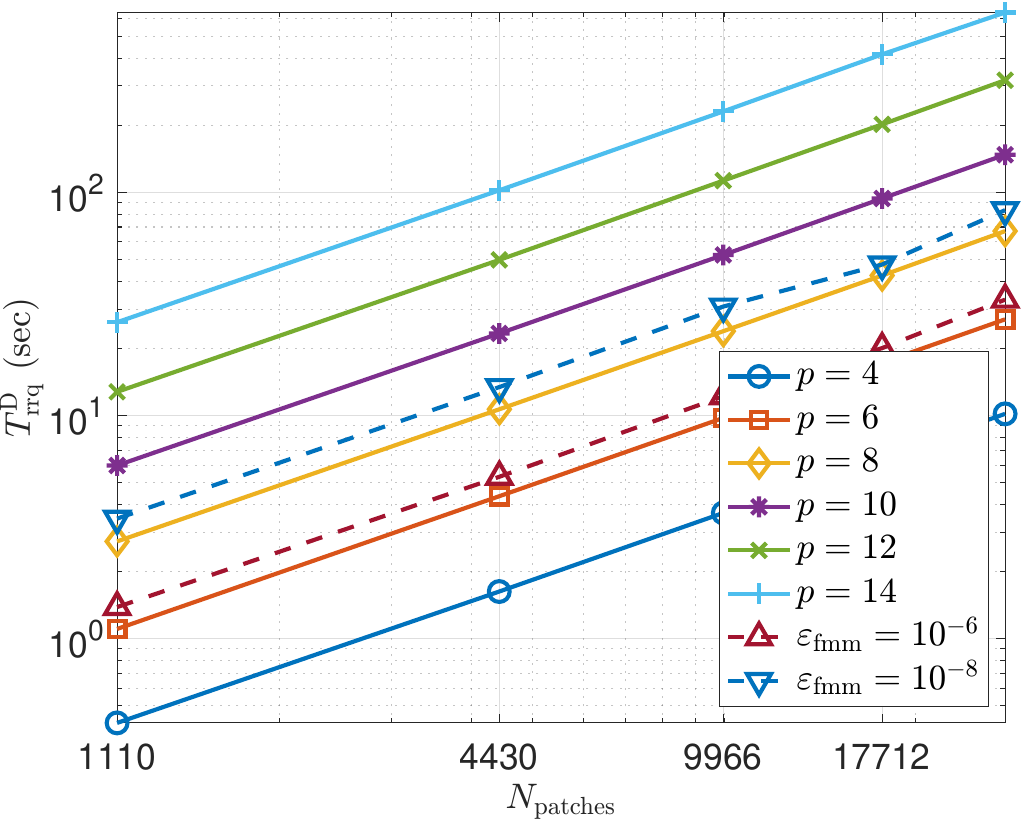}
\caption{\sf Computation time in seconds in the quadrature correction part
  of Laplace layer potentials for the stellarator shaped boundary in
  \cref{fig:stellarator} for $p=4,6,\ldots,14$.
  Left: Computation time $T_{\rm RRQ}$ for the Laplace single layer potential operator.
  Right: Computation time $T_{\rm RRQ}$ for the Laplace double layer potential operator.
  In both figures, the dashed lines show the computation time on a single FMM call
  on the same set of points.
}
\label{fig:stellaratortiming}
\end{figure}

\Cref{fig:stellaratortiming} plots the computation time in the quadrature correction part
of our scheme as a function of $N_{\rm patches}$ for the stellarator shaped boundary.
\Cref{table:stellaratorthroughput} reports the throughput (in points processed
per second per core)
of our quadrature scheme. The first column lists the approximation order $p$. The second column lists the throughput of our quadrature scheme
for the Laplace single layer potential operator, i.e.,
\be
X^{S}_{\rm RRQ} = N/T^{S}_{\rm RRQ},
\ee
where $N$ is the total number of discretization points on the boundary and
$T^{S}_{\rm RRQ}$ is the time for building the sparse quadrature correction matrix
for the SLP. The third column lists the throughput of a single FMM vall
for evaluating the smooth part
of the SLP, i.e.,
\be
X^{S}_{\rm FMM} = N/T^{S}_{\rm FMM},
\ee
where $T^{S}_{\rm FMM}$ is the time on applying $\bS_{\rm smooth}$ to a given vector using
the FMM. The fourth column lists the throughput of our quadrature scheme
for the Laplace double layer potential operator. The fifth column lists the throughput
of a single FMM call for evaluating the smooth part
of the DLP. The sixth column lists the tolerance used in the FMM.
And the last column lists the relative $l_\infty$ error for varifying Green's identity
in \cref{greenformula}.

We would like to compare this table with the previous
state-of-the-art optimized adaptive quadrature scheme in \cite{greengard2021jcp}.
In \cite[Table 1(c), (d)]{greengard2021jcp}, the speeds for $p$ up to $8$ are reported.
At about six digits of accuracy, the throughput for $p=8$ for the Helmholtz single layer
potential in \cite{greengard2021jcp} is $945$ points per second per core, the throughput
for the Helmholtz FMM is about $5960$ points per second per core. That is, the throughput
in the quadrature correction is approximately six times slower than the FMM.
On the other hand, the throughput in the quadrature correction of our scheme
is $9921$ per second per core for the Laplace single layer potential operator and
$15017$ per second per core
for the Laplace double layer potential operator, both of which are close to the FMM speed.
Note that if iterative solvers such as GMRES are used to solve the resulting linear
system, then the sparse matrices $\bS_{\rm RRQ}$ and $\bD_{\rm RRQ}$ only need to be built
once, while the FMM has to be applied $N_{\rm iter}$ times. Here, $N_{\rm iter}$ is the
number of GMRES iterations to reach the prescribed tolerance, which ranges from $10$ to
low hundreds for well-conditioned problems, but could be much larger for ill-conditioned
problems.

\begin{table}[t]
\caption{\sf Throughput in points processed per second per core of our quadrature scheme
during the solve phase for the stellarator shaped boundary.}
\centering
\begin{tabular}{|c|c|c|c|c|c|c|}
\hline
$p$ & $X^{S}_{\rm RRQ}$ & $X^{S}_{\rm FMM}$ & $X^{D}_{\rm RRQ}$ & $X^{D}_{\rm FMM}$ &  $\varepsilon_{\rm FMM}$ & $E_\infty$ \\
\hline
4  & 21285 & 26894 & 27212 & 24144 & 1.00e-04 &  1.08e-04 \\ 
6  & 15065 & 20445 & 21376 & 17494 & 1.00e-06 &  2.07e-05 \\ 
8  & 9921  & 13983 & 15017 & 11646 & 1.00e-08 &  2.76e-07 \\ 
10 & 6621  & 13341 & 10449 & 10803 & 1.00e-09 &  3.15e-08 \\ 
12 & 4324  & 9888  & 6845  & 8089  & 1.00e-10 &  7.59e-10 \\ 
14 & 2907  & 9113  & 4534  & 7178  & 1.00e-12 &  9.74e-12 \\ 
\hline
\end{tabular}
\label{table:stellaratorthroughput}
\end{table}

\begin{remark}
  Although we use the FMM as a benchmark to demonstrate the performance of our
  quadrature scheme, its efficiency is also quite important for fast direct
  solvers~\cite{fds1,rskelf1,fds16,fds2,fds3,kenho1,ho2016cpam1,ho2016cpam2,fds5,fds6,fds7,fds8,minden2017mms,rskelf2}.
  In these solvers, generating matrix entries for discretized integral operators
  consumes a significant amount of time when constructing sparse representations
  of the system matrix and its inverse.
\end{remark}

\subsubsection{Interlocking tori}
\begin{figure}[!ht]
\centering
\includegraphics[height=45mm]{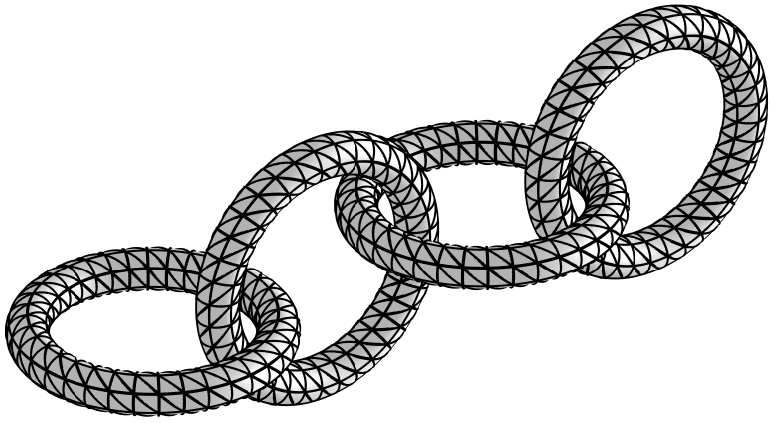}
\caption{\sf Boundary with four interlocking tori.}
\label{fig:iltori}
\end{figure}

Next, we assess the performance of our quadrature scheme during the evaluation phase,
where layer potentials are evaluated in a volumetric grid. That is, $N_S$
sources are on the
boundary and $N_T$ targets are in the entire computational domain with $N_T\gg N_S$.
Such tasks are often encountered 
when the solution to the PDE in the whole volume is required, or when the PDE has
an inhomogeneous term, i.e., the governing equation is the Poisson equation instead
of the Laplace equation (see, for example, \cite{epstein2023arxiv}).
We set up a complex geometry test example where the predominant cost of layer
potential evaluation comes from computing nearly singular integrals.
The boundary consists of four interlocking tori, where
each torus is a shifted and rotated version of
\eqref{eq:torus}, with parameters $a=3$, $b=0.5$, $\omega_c=0$, $\omega_n=0$ and
$\omega_m=0$. A translation of $4.95$ along the $x$-direction
is applied so that the shortest distance between two adjacent tori is $0.05$.
See \cref{fig:iltori}.

Each torus surface is discretized into four different configurations:
$6\times 36$, $12\times 72$, $18\times 108$, and $24\times 144$ rectangules, with each
rectangle further subdivided into two equal triangular patches in the parameter space.
For each discretization, we solve the exterior Dirichlet problem \cref{exteriordirichlet},
then evaluate the solution $u$ 
on a corresponding uniform grid of  $101\times 101\times 101$, $201\times 201\times 201$, $301\times 301\times 301$,
and $401\times 401\times 401$ points within the bounding box $[-5,20] \times [-6,6] \times [-6, 6]$ enclosing the boundary.
The total number of target points in the exterior of the interlocking tori is $N_T = [1013571, 7988041, 26825787, 63428399]$
for these four cases, out of which there are $N_T^{c}=[328492, 1198758, 2627046, 4613672]$ close target points that
require quadrature correction, respectively.

\begin{figure}[!ht]
\centering
\includegraphics[height=45mm]{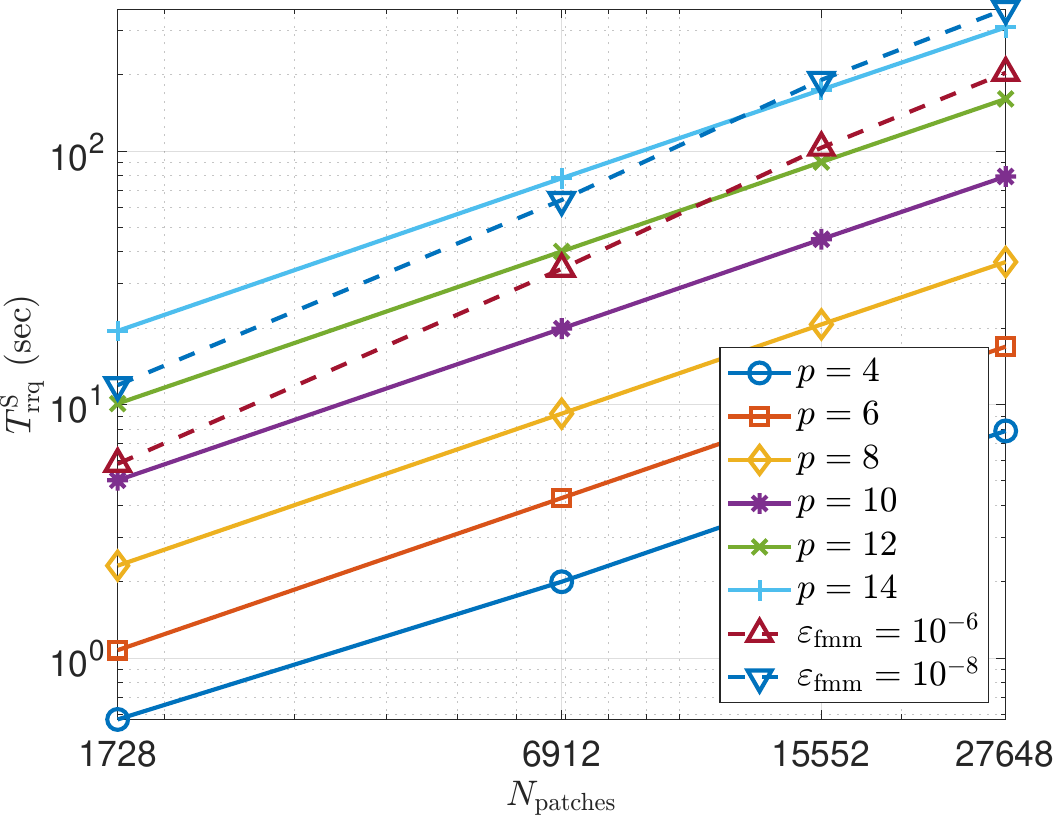}
\hspace*{2mm}
\includegraphics[height=45mm]{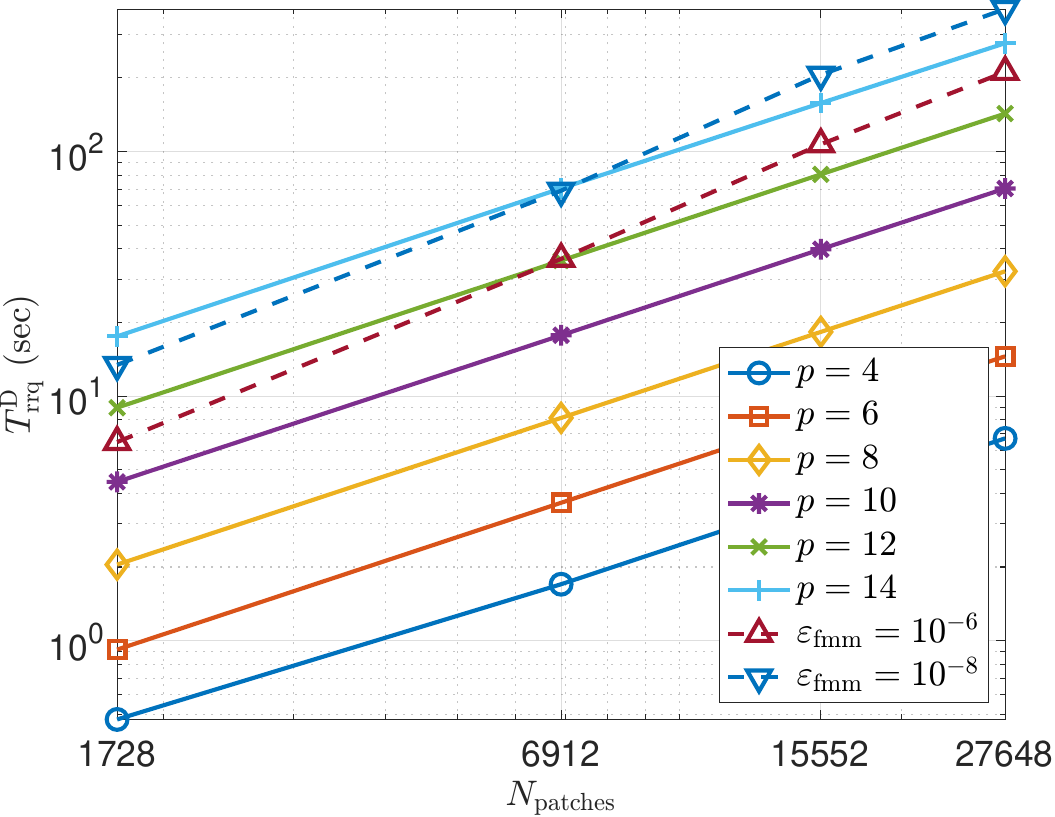}
\caption{\sf Computation time in seconds in the quadrature correction part for the
volumetric evaluation of Laplace layer potentials for the interlocking-tori boundary in
\cref{fig:iltori} for $p=4,6,\ldots,14$.
Left: Computation time $T_{\rm RRQ}$ for the Laplace single layer potential operator.
Right: Computation time $T_{\rm RRQ}$ for the Laplace double layer potential operator.
In both figures, the dashed lines show the computation time on a single FMM call
on the same set of source points on the boundary and target points in the volume.
}
\label{fig:iltoritiming}
\end{figure}

\Cref{fig:iltoritiming} plots the computation time in the quadrature correction part
of our scheme as a function of $N_{\rm patches}$ for the interlocking-tori boundary.
\Cref{table:iltorierror} reports the throughput (in points processed per second per core)
of our quadrature scheme when layer potentials are viewed as maps from the boundary
to the volume. Here the throughput for the quadrature correction part
is defined via the formulas
\be
X^{S}_{\rm RRQ} = N_T^{c}/T^{S}_{\rm RRQ},
\quad
X^{D}_{\rm RRQ} = N_T^{c}/T^{D}_{\rm RRQ},
\ee
and the throughput for the FMM part is defined via the formulas
\be
X^{S}_{\rm FMM} = N_T/T^{S}_{\rm FMM},
\quad
X^{D}_{\rm FMM} = N_T/T^{D}_{\rm FMM}.
\ee
Note that $N_T^{c}$ is a small fraction of $N_T$. Thus, the time spent on quadrature
correction is a small fraction of that on the FMM, even though the throughput are
close to each other.

\begin{table}[!ht]
\caption{\sf Throughput in points processed per second per core of our quadrature scheme
during the evaluation phase for the interlock-tori boundary.}
\centering
\begin{tabular}{|c|c|c|c|c|c|c|}
\hline
$p$  & $X^{S}_{\rm RRQ}$ & $X^{S}_{\rm FMM}$ & $X^{D}_{\rm RRQ}$ & $X^{D}_{\rm FMM}$ &
$\varepsilon_{\rm FMM}$ & $E_\infty$ \\
\hline
4  & 583929 & 468224 & 687077 & 433446 & 1.00e-04 &  1.14e-04 \\ 
6  & 272680 & 260129 & 318736 & 250650 & 1.00e-06 &  1.41e-06 \\ 
8  & 126596 & 140465 & 143307 & 130197 & 1.00e-08 &  3.22e-08 \\ 
10 & 58368  & 108913 & 65770  & 102488 & 1.00e-09 &  7.47e-10 \\ 
12 & 28941  & 74238  & 32493  & 69085  & 1.00e-10 &  1.33e-11 \\ 
14 & 14995  & 55063  & 16584  & 50249  & 1.00e-12 &  6.11e-12 \\ 
\hline
\end{tabular}
\label{table:iltorierror}    
\end{table}

\section{Conclusions and further discussions}\label{sec:conclusions}
We have developed a recursive reduction quadrature (RRQ) scheme for evaluating
Laplace layer potentials on surfaces in three dimensions. The RRQ scheme starts
from a quaternion harmonic polynomial approximation of the density for the Laplace
double layer potential. Since harmonic polynomials are defined over the entire
ambient space, this approximation naturally extends to a small volumetric
neighborhood of the surface patch. Moreover, the quaternion harmonic polynomial
approximation is constructed so that the general Stokes theorem can be applied
to reduce the surface integrals to line integrals over the boundary of the patch.
Certain degrees of freedom exist in choosing these line integrals, and we have
adopted target-centered formulas to derive these 1-forms. The resulting line
integrals have exactly the same singularities as the kernels in the original
surface integrals. We then apply singularity-swap quadrature to evaluate these
line integrals recursively. In other words, line integrals are evaluated via
integration by parts, this time by extending the real parameter of the boundary
curve into the complex plane. Altogether, we have been able to reduce the evaluation
of singular and nearly singular surface integrals to function evaluations at the
vertices of the triangular patch. The resulting scheme can handle self and near
interactions, as well as close evaluations, with unparalleled accuracy and efficiency.

As observed in \cite{zhu2021thesis,zhu2022sisc}, the closest relative of
our quadrature scheme is the {\it kernel-split quadrature} developed by
Johan Helsing and his
collaborators~\cite{helsing2011sisc,helsing2015acom,helsing2018sisc,helsing2014jcp,helsing2018jcp,helsing2008jcp2,helsing2008jcp1}, which has been extended to evaluating
almost all layer potentials in two dimensions. Indeed,
extending RRQ to other layer potentials in three dimensions,
such as the Helmholtz, Stokes, and elastic layer potentials, is straightforward.
Moreover, the recursive reduction approach used in our quadrature construction
is independent of the shapes of patches and can thus be easily extended to,
for example, quadrilateral patches. We are currently working on these extensions,
and the findings will be reported at a later date.

\section{Acknowledgments}
The authors would like to thank Charles Epstein and Leslie Greengard
for helpful discussions.

\bibliographystyle{siam}
\bibliography{lapquad5_arxiv}

\end{document}